\documentclass[final,onefignum,onetabnum,onealgnum]{siamart251216}

\usepackage{amsmath,amsfonts,amsopn,amssymb}
\usepackage{graphicx}
\usepackage{tikz}
\usepackage[caption=false]{subfig}
\usepackage{multirow,relsize,makecell}
\usepackage{url}
\ifpdf
\DeclareGraphicsExtensions{.eps,.pdf,.png,.jpg}
\else
\DeclareGraphicsExtensions{.eps}
\fi
\usepackage{algorithm,algorithmic}
\usepackage[shortlabels]{enumitem}
\usepackage{accents}

\numberwithin{theorem}{section}
\newsiamremark{assumption}{Assumption}
\newsiamremark{remark}{Remark}
\newsiamremark{example}{Example}
\crefname{assumption}{Assumption}{Assumptions}
\crefname{remark}{Remark}{Remarks}
\crefname{example}{Example}{Examples}

\headers{Two-level preconditioning for reflections}{Jongho Park}
\title{Symmetric two-level preconditioning\\ for the method of reflections\thanks{Submitted to arXiv.
\funding{This work was supported by the University Development Fund from The Chinese University of Hong Kong,
Shenzhen (UDF01004630).}
}}
\author{Jongho Park\thanks{School of Science and Engineering, The Chinese University of Hong Kong, Shenzhen, Guangdong 518172, China (\email{jonghopark@cuhk.edu.cn}).}}
\ifpdf
\hypersetup{pdftitle={Symmetric two-level preconditioning for the method of reflections},pdfauthor={Jongho Park}}
\fi
\graphicspath{{figures/}}

\begin{document}
\maketitle

\begin{abstract}
We develop and analyze a symmetric two-level preconditioner for the method of reflections for Laplace problems in perforated domains. Using constant boundary modes, we define a coarse correction through a symmetric coercive dual formulation. Combining this coarse correction with parallel reflection corrections gives a symmetric positive definite preconditioner for solving the dual system by the conjugate gradient method. Under a geometric separation condition, we prove a condition number bound independent of the number and sizes of the holes and the diameter of the ambient domain. We also derive geometry-dependent convergence estimates for the one-level methods that explain the need for a coarse correction.
\end{abstract}

\begin{keywords}
Methods of reflections,
Two-level methods,
Preconditioners,
Convex duality
\end{keywords}
\begin{AMS}
65N55, 
65F10, 
65N38, 
49M27, 
90C46 
\end{AMS}

\section{Introduction}
\label{Sec:Introduction}
We construct and analyze a symmetric two-level preconditioner for the method of reflections applied to Laplace problems in perforated domains. The method of reflections is widely used in potential theory, fluid mechanics, and multiple scattering problems, and its convergence and acceleration have received increasing attention in recent years~\cite{CDGH:2026a,CDGH:2026b,CGHS:2019,HV:2018,LLS:2021}.

The method of reflections solves boundary value problems by combining auxiliary solves associated with individual boundary components. Each correction enforces the prescribed condition on one component, using the current iterate to account for the interactions with the others. The parallel version was described in~\cite{Golusin:1934}. As the number of holes increases, the collective effect of the interactions can slow convergence, motivating the use of a coarse correction~\cite{CGHS:2019}.

The projection formulation in~\cite{LLS:2021} establishes convergence of alternating reflections and an averaged parallel method in the orthogonal setting. Coarse corrections and two-level methods of reflections were introduced in~\cite{CGHS:2019}, using low-order boundary modes in a formulation that identifies reflections with block Gauss--Seidel and block Jacobi iterations. Their scalability was investigated numerically.

More recently, boundary integral formulations have been used to interpret the methods of reflections as algebraic Schwarz methods for Helmholtz multiple scattering problems~\cite{CDGH:2026a}. This interpretation leads to accelerated variants incorporating overlap and coarse corrections, which can also be used as preconditioners for GMRES. Related constructions for time-harmonic elasticity are presented in~\cite{CDGH:2026b}. Building on these developments, we construct a symmetric positive definite two-level preconditioner for Laplace problems using constant boundary modes and prove a condition number bound independent of the number and sizes of the holes and the domain diameter under an explicit geometric separation condition. The analysis uses the connection with subspace correction methods~\cite{Xu:1992,XZ:2002}.

To construct and analyze the preconditioner, we use a symmetric coercive dual formulation. The dualization framework in~\cite{JPX:2025} identifies alternating and parallel reflections with the block Gauss--Seidel and block Jacobi methods for this formulation. The corresponding primal and dual errors have equal energy norms, allowing convergence estimates for the block methods to be transferred to the reflection algorithms. We first derive one-level estimates that display the dependence on the number of holes, the domain diameter, and the smallest enclosing radius. The analysis uses spectral estimates for the block Jacobi method and the Xu--Zikatanov identity~\cite{XZ:2002} for the block Gauss--Seidel method. Lower bounds for regular arrays show deterioration as the number of holes increases, while numerical comparisons illustrate the influence of block ordering on successive corrections.

For the constant boundary modes, our coarse space agrees with the space in~\cite{CGHS:2019} after the block diagonal change of variables. We define the coarse correction through the dual energy, so that it gives an orthogonal projection, and apply it before and after the parallel local corrections to preserve symmetry. The key estimates control dual variables with zero mean on each boundary component using disjoint enclosing neighborhoods. Together with subspace correction arguments~\cite{PX:2025,TW:2005}, these estimates give a condition number bound depending only on the geometric separation parameter. We also describe the corresponding two-level correction directly in the primal variable.

The analysis is formulated in Hilbert spaces, and the uniform two-level bound passes to conforming Galerkin discretizations. Fourier Galerkin experiments for planar arrays support the spectral estimates and demonstrate the effect of the coarse correction in the conjugate gradient method.

The remainder of the paper is organized as follows. We review the method of reflections in \cref{Sec:Reflections} and derive its dual formulation in \cref{Sec:Duality}. Geometric assumptions and one-level convergence estimates are presented in \cref{Sec:OneLevel}. The two-level method and its analysis are developed in \cref{Sec:TwoLevel}. Galerkin discretization and numerical results are discussed in \cref{Sec:Computational}. We conclude in \cref{Sec:Conclusion}.

\subsection{Notation}
\label{Subsec:notation}
For real Hilbert spaces $V$ and $W$, the topological dual of $V$ is denoted by $V^*$, and the duality pairing is denoted by $\langle\cdot,\cdot\rangle\colon V^*\times V\to\mathbb R$.
Hilbert spaces are identified with their double duals. The dual adjoint of a bounded linear operator $B\colon V\to W$ is denoted by $B^*\colon W^*\to V^*$.
For matrices, the superscript $t$ denotes the transpose, and the duality pairing reduces to the Euclidean inner product.

A bounded linear operator $C\colon V\to V^*$ is called symmetric positive definite (SPD) if the bilinear form $\langle Cv,w\rangle$ is symmetric and coercive. The associated inner product, norm, and induced operator norm are defined by
\[
(v,w)_C=\langle Cv,w\rangle,
\quad \|v\|_C^2=\langle Cv,v\rangle,
\quad \|T\|_C=\sup_{0\ne v\in V}\frac{\|Tv\|_C}{\|v\|_C},
\]
where $T\colon V\to V$ is bounded and linear.
The spectrum of $T$ is denoted by $\sigma(T)$. If $T$ is self-adjoint and coercive, its spectral condition number is defined by
\begin{equation*}
\kappa(T)=\frac{\sup\sigma(T)}{\inf\sigma(T)}.
\end{equation*}

For nonnegative quantities $x$ and $y$, we write $x\lesssim y$ if $x\le Cy$ for a constant independent of the parameters of interest.
We use $x\gtrsim y$ for the reverse inequality and $x\eqsim y$ when both hold.

\section{The method of reflections}
\label{Sec:Reflections}
In this section, we review the essentials of the method of reflections. We closely follow the presentation in~\cite{CGHS:2019}. We also summarize existing results~\cite{CGHS:2019,LLS:2021} on the projection and block iterative formulations that are relevant to the analysis in the following sections.

\subsection{A model problem}
We first introduce a model Laplace problem in a perforated domain. Let $\Omega\subset\mathbb R^d$, $d=2,3$, be a bounded connected Lipschitz domain containing $N$ Lipschitz subdomains $O_i$ with pairwise disjoint closures. Assume that each $\Gamma_i=\partial O_i$ is Lipschitz and has positive distance from the other holes and from $\partial\Omega$. We denote the perforated domain by
\[
\Omega_{\mathrm p}=\Omega\setminus\bigcup_{i=1}^N\overline{O}_i.
\]
The subscript $\mathrm p$ stands for \emph{perforated}.

We consider the following Laplace boundary value problem:
\begin{equation}
\label{model}
-\Delta u=0\ \text{in }\Omega_{\mathrm p},
\quad u=0\ \text{on }\partial\Omega,
\quad u=g_i\ \text{on }\Gamma_i,\quad 1\le i\le N,
\end{equation}
where $g_i\in H^{\frac{1}{2}}(\Gamma_i)$. By extending the solution harmonically into each $O_i$ with boundary value $g_i$, we obtain a function in $H_0^1(\Omega)$. Indeed, the matching interior and exterior traces on $\Gamma_i$ give a globally $H^1$ function by the standard gluing property~\cite{EG:2021}, and its trace on $\partial\Omega$ is zero. We use the same symbol $u$ for this extension.

On the real Hilbert space $V=H_0^1(\Omega)$, we define the bilinear form $a(\cdot,\cdot)$ by
\[
a(v,w)=\int_\Omega\nabla v\cdot\nabla w\,dx,
\quad v,w\in V.
\]
We denote the bounded trace operator on $\Gamma_i$ by
\[
\gamma_i\colon V\to H^{\frac{1}{2}}(\Gamma_i),
\quad \gamma_i v=v|_{\Gamma_i}.
\]
We define the harmonic spaces by
\begin{equation}
\label{harmonic_spaces}
H=\{v\in V:\Delta v=0\text{ in }\Omega\setminus\textstyle\bigcup_{i=1}^N\Gamma_i\},
\quad
H_i=\{v\in V:\Delta v=0\text{ in }\Omega\setminus\Gamma_i\}.
\end{equation}
The Laplace equations in~\eqref{harmonic_spaces} are understood weakly on the indicated open sets. Since these spaces are contained in $V$, the traces from the two sides of each boundary component agree.
For $w_i\in H^{\frac{1}{2}}(\Gamma_i)$, let $\mathcal{H}_i w_i\in H_i$ be the harmonic extension with trace $w_i$ on $\Gamma_i$ and zero trace on $\partial\Omega$. The harmonic extension operator satisfies
\begin{equation}
\label{harmonic_extension}
\mathcal{H}_i\colon H^{\frac{1}{2}}(\Gamma_i)\to H_i,
\quad \gamma_i\mathcal{H}_i w_i=w_i.
\end{equation}
This extension is obtained by solving Dirichlet problems in $O_i$ and $\Omega\setminus\overline{O}_i$. Using the Dirichlet Green function of $\Omega$, it can also be represented by a single-layer integral on $\Gamma_i$, which leads to a boundary integral discretization. We refer to~\cite{McLean:2000} for an introduction to this representation and to~\cite{LLS:2021} for its use in the method of reflections.

\subsection{Alternating and parallel reflections}
The method of reflections corrects boundary discrepancies by harmonic extension. The corrections are applied successively or in parallel, as described in \cref{Alg:alternating_reflections,Alg:parallel_reflections}. In the parallel method, the relaxation parameter $\omega$ is chosen to ensure convergence; for example, $\omega=\frac{1}{N}$ gives the averaged method analyzed in~\cite[Theorem~2]{LLS:2021}.

\begin{algorithm}
\caption{Alternating method of reflections}
\label{Alg:alternating_reflections}
\begin{algorithmic}
\STATE Choose $u^{(0)}\in H$.
\FOR{$n=0,1,2,\ldots$}
  \FOR{$i=1,2,\ldots,N$}
    \STATE Compute the correction using $\mathcal H_i$ (see~\eqref{harmonic_extension}) and update the iterate:
\vspace{-0.15cm}
\begin{equation*}
d_i^{(n)}=\mathcal{H}_i\bigl(g_i-\gamma_i u^{(n+\frac{i-1}{N})}\bigr),
\quad u^{(n+\frac{i}{N})}=u^{(n+\frac{i-1}{N})}+d_i^{(n)}.
\end{equation*}
\vspace{-0.35cm}
  \ENDFOR
\ENDFOR
\end{algorithmic}
\end{algorithm}

\begin{algorithm}
\caption{Parallel method of reflections}
\label{Alg:parallel_reflections}
\begin{algorithmic}
\STATE Choose $u^{(0)}\in H$ and $\omega>0$.
\FOR{$n=0,1,2,\ldots$}
  \FOR{$i=1,2,\ldots,N$ \textbf{in parallel}}
    \STATE Compute the correction using $\mathcal H_i$ (see~\eqref{harmonic_extension}):
\vspace{-0.15cm}
\begin{equation*}
d_i^{(n)}=\mathcal{H}_i(g_i-\gamma_i u^{(n)}).
\end{equation*}
\vspace{-0.35cm}
  \ENDFOR
  \STATE Update the iterate:
\vspace{-0.15cm}
\begin{equation*}
u^{(n+1)}=u^{(n)}+\omega\sum_{i=1}^N d_i^{(n)}.
\end{equation*}
\vspace{-0.35cm}
\ENDFOR
\end{algorithmic}
\end{algorithm}
The projection interpretation in~\cite{LLS:2021} identifies $\mathcal{H}_i\gamma_i$ as the $a$-orthogonal projection onto $H_i$. Indeed, $v-\mathcal{H}_i\gamma_i v$ has zero trace on $\Gamma_i$, and the weak harmonicity of $w_i\in H_i$ gives
\[
a(v-\mathcal{H}_i\gamma_i v,w_i)=0,\quad w_i\in H_i.
\]
Thus, on the error, alternating reflections are a product of complementary orthogonal projections.

\subsection{Relevant existing results}
We summarize results from recent works on the projection interpretation, block iterative formulations, and coarse corrections that are directly relevant to the present work.

The Hilbert space analysis in~\cite[Theorems~1 and~2]{LLS:2021} uses orthogonal projections to establish convergence of alternating reflections and of the averaged parallel method. For alternating reflections, the same work discusses subspace-angle estimates and the Xu--Zikatanov identity~\cite{XZ:2002} for the associated product of projections.

The boundary formulation in~\cite{CGHS:2019} uses the traces on $\Gamma_i$ of harmonic components in $H_i$. Alternating and parallel reflections are identified with the block Gauss--Seidel and block Jacobi methods for this boundary system, respectively. Based on the same boundary system, coarse corrections are constructed from low-order boundary modes and combined with either block iteration. The scalability of the resulting two-level methods is investigated numerically.

\section{Convex duality and the symmetric dual formulation}
\label{Sec:Duality}
In this section, we derive a symmetric dual formulation of~\eqref{model} and identify the methods of reflections as dualizations~\cite{JPX:2025} of block iterative methods.

\subsection{Convex duality}
Fenchel--Rockafellar duality relates a convex minimization problem to a second problem on a dual space~\cite{ET:1999,Rockafellar:1967}. Let $V$ and $W$ be real Hilbert spaces, let $F\colon V\to\overline{\mathbb{R}}$ and $G\colon W\to\overline{\mathbb{R}}$ be proper, lower semicontinuous convex functions, and let $B\colon V\to W$ be a bounded linear operator. The Legendre--Fenchel conjugate $F^*\colon V^*\to\overline{\mathbb{R}}$ is defined by
\[
F^*(\ell)=\sup_{v\in V}\{\langle\ell,v\rangle-F(v)\},
\quad \ell\in V^*.
\]
The conjugate $G^*$ is defined in the same way. Fenchel--Rockafellar duality associates the primal problem
\begin{equation}
\label{general_primal}
\min_{v\in V}\{F(v)+G(Bv)\}
\end{equation}
with the dual problem
\begin{equation}
\label{general_dual}
\min_{q\in W^*}\{F^*(-B^*q)+G^*(q)\}.
\end{equation}
We assume that both problems admit solutions and that strong duality holds; sufficient conditions are given in~\cite{ET:1999,Rockafellar:1967}. Then $u\in V$ and $p\in W^*$ solve~\eqref{general_primal} and~\eqref{general_dual}, respectively, if and only if they satisfy the primal--dual relations
\begin{equation}
\label{primal_dual}
-B^*p\in\partial F(u),
\quad Bu\in\partial G^*(p),
\end{equation}
where $\partial$ denotes the convex subdifferential.

An iterative method $\{u^{(n)}\}$ for the primal problem~\eqref{general_primal} is called a \emph{dualization}~\cite{JPX:2025} of an iterative method $\{p^{(n)}\}$ for the dual problem~\eqref{general_dual} if, for any initial configuration of the dual method, the primal method can be initialized so that their iterates satisfy
\begin{equation*}
-B^*p^{(n)}\in\partial F(u^{(n)}),
\quad n\ge1.
\end{equation*}
When $F^*$ is differentiable at $-B^*p^{(n)}$, this relation determines the primal iterate by $u^{(n)}=(F^*)'(-B^*p^{(n)})$. In the quadratic case below, this recovery is linear.

\subsection{A constrained energy and its dual}
We now express the model problem~\eqref{model} as a constrained minimization problem and derive its dual. Returning to $V=H_0^1(\Omega)$, we define the product trace space $W$, the combined trace operator $\gamma\colon V\to W$, and the energy $F$ by
\[
W=\prod_{i=1}^N H^{\frac{1}{2}}(\Gamma_i),
\quad
\gamma v=(\gamma_1v,\ldots,\gamma_Nv),
\quad
F(v)=\frac12a(v,v).
\]
The dual space is $W^*=\prod_{i=1}^N H^{-\frac{1}{2}}(\Gamma_i)$. We write $g=(g_1,\ldots,g_N)\in W$ and define the operator $A\colon V\to V^*$ by
\[
\langle Av,w\rangle=a(v,w),
\quad v,w\in V.
\]
The operator $A$ is SPD, with $(v,w)_A=a(v,w)$ and $\|v\|_A^2=a(v,v)=|v|_{H^1(\Omega)}^2$.

The weak formulation of the model problem~\eqref{model} is written as the following minimization problem:
\begin{equation}
\label{primal_energy}
\min_{v\in V}\left\{\frac12a(v,v)+\chi_{\{g\}}(\gamma v)\right\},
\end{equation}
where $\chi_{\{g\}}$ is the indicator function, equal to zero at $g$ and $+\infty$ elsewhere. To write~\eqref{primal_energy} in the form~\eqref{general_primal}, take $B=-\gamma$ and $G=\chi_{\{-g\}}$. The conjugate formulas
\[
G^*(q)=-\langle q,g\rangle,
\quad F^*(\ell)=\frac12\langle\ell,A^{-1}\ell\rangle
\]
give the dual problem
\begin{equation}
\label{dual_energy}
\min_{q\in W^*}\left\{\frac12\langle q,Sq\rangle-\langle q,g\rangle\right\},
\end{equation}
where $S=\gamma A^{-1}\gamma^*\colon W^*\to W$. Its optimality condition is the linear equation
\begin{equation}
\label{dual_equation}
Sp=g.
\end{equation}
Since $W=\prod_{i=1}^N H^{\frac{1}{2}}(\Gamma_i)$ has a product structure, $S$ has blocks
\[
S_{ij}=\gamma_iA^{-1}\gamma_j^*\colon H^{-\frac{1}{2}}(\Gamma_j)\to H^{\frac{1}{2}}(\Gamma_i),
\quad 1\le i,j\le N.
\]
Let $G_\Omega$ denote the Dirichlet Green function for $-\Delta$ on $\Omega$. For $q_i\in L^2(\Gamma_i)$ and $r_j\in L^2(\Gamma_j)$, the Green representation~\cite{McLean:2000} gives
\begin{equation*}
\langle q_i,S_{ij}r_j\rangle
=\int_{\Gamma_i}\int_{\Gamma_j}G_\Omega(x,y)q_i(x)r_j(y)\,ds_y\,ds_x.
\end{equation*}
By the first primal--dual relation in~\eqref{primal_dual}, the primal solution is $u=u(p)$, where the primal variable associated with a dual variable $q\in W^*$ is given by
\begin{equation}
\label{linear_recovery}
u(q)=A^{-1}\gamma^*q
=\sum_{i=1}^NA^{-1}\gamma_i^*q_i.
\end{equation}

The standard trace-extension results and the variational characterization of the Dirichlet problem give the following properties~\cite{EG:2021,McLean:2000}.

\begin{proposition}
\label{Prop:dual}
The operator $S\colon W^*\to W$ is SPD. The dual problem~\eqref{dual_energy} has a unique solution $p$ characterized by~\eqref{dual_equation}, and $u(p)$ is the solution of~\eqref{primal_energy}. For any $q,r\in W^*$, the associated primal variables satisfy
\begin{equation*}
a(u(q),u(r))=\langle r,Sq\rangle=\langle q,Sr\rangle.
\end{equation*}
\end{proposition}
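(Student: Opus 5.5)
The energy identity comes first, since the other claims rest on it. For $q,r\in W^*$, the definition \eqref{linear_recovery} gives $u(q)=A^{-1}\gamma^*q$. Hence
\[
a(u(q),u(r))=\langle Au(q),u(r)\rangle=\langle \gamma^*q,A^{-1}\gamma^*r\rangle=\langle q,\gamma A^{-1}\gamma^*r\rangle=\langle q,Sr\rangle .
\]
Symmetry of $a$ then yields $\langle r,Sq\rangle=\langle q,Sr\rangle$. This shows at once that $S$ is bounded (as a composition of bounded maps), symmetric, and nonnegative, with $\langle q,Sq\rangle=\|u(q)\|_A^2$.

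The main step is coercivity, that is, $\langle q,Sq\rangle\gtrsim\|q\|_{W^*}^2$. I would obtain it from surjectivity of $\gamma$ with a bounded right inverse. Since the $\Gamma_i$ are Lipschitz and have pairwise positive distances and positive distance to $\partial\Omega$, we can extend each $w_i\in H^{1/2}(\Gamma_i)$ to a function supported in a small neighborhood of $\Gamma_i$, using the trace extension theorem and a cutoff. Summing these extensions gives $E\colon W\to V$ with $\gamma E=I$ and $\|Ew\|_A\le C\|w\|_W$, where Poincaré's inequality on $H^1_0(\Omega)$ lets us use the $A$-norm. Then
\[
\langle q,w\rangle=\langle q,\gamma Ew\rangle=\langle\gamma^*q,Ew\rangle=a(u(q),Ew)\le\|u(q)\|_A\,C\|w\|_W .
\]
Taking the supremum over $w$ gives $\|q\|_{W^*}\le C\|u(q)\|_A=C\langle q,Sq\rangle^{1/2}$. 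So $S$ is SPD. The constant here is not uniform in the geometry, but the statement only needs qualitative coercivity.

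Next, existence and uniqueness for the dual problem. The functional in \eqref{dual_energy} is a strictly convex, coercive, continuous quadratic. By Lax--Milgram (equivalently, Riesz representation in the $S$-inner product) it has a unique minimizer $p$, and $p$ is characterized by the Euler--Lagrange equation $Sp=g$, which is \eqref{dual_equation}.

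Finally, I would show that $u(p)$ solves the primal problem directly, without invoking strong duality in the abstract. First, $\gamma u(p)=\gamma A^{-1}\gamma^*p=Sp=g$, so $u(p)$ is feasible for \eqref{primal_energy}. Second, for any $v\in V$ with $\gamma v=0$,
\[
a(u(p),v)=\langle\gamma^*p,v\rangle=\langle p,\gamma v\rangle=0 .
\]
This is exactly the optimality condition for minimizing $\frac12a(v,v)$ over the affine set $\{\gamma v=g\}$. Since $a$ is strictly convex on $V$, that minimizer is unique, so $u(p)$ is the solution. It is also the harmonic extension described after \eqref{model}, because $u(p)$ is harmonic away from $\bigcup_i\Gamma_i$ and has traces $g_i$.

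The only delicate point is the bounded right inverse of $\gamma$ in the coercivity step. It relies on the Lipschitz trace theorem and on the separation of the boundary components, which are standard results \cite{EG:2021,McLean:2000}.
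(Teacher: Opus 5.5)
Your proof is correct and follows the route the paper indicates. The paper gives no detailed argument: it only cites standard trace-extension results, which correspond to your bounded right inverse $E$ of $\gamma$ giving coercivity of $S$, and the variational characterization of the Dirichlet problem, which corresponds to your feasibility-plus-orthogonality check for $u(p)$. That check also conveniently avoids relying on the assumed strong duality behind \eqref{primal_dual}.

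One small imprecision: the bound $\|Ew\|_A\le C\|w\|_W$ needs no Poincar\'e inequality, since $|v|_{H^1(\Omega)}\le\|v\|_{H^1(\Omega)}$. The Friedrichs--Poincar\'e inequality is instead what makes $A$ coercive on $V$, so that $A^{-1}$, $u(q)$ and $S$ are well defined and the primal minimizer is unique.
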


\subsection{Reflections as dual block corrections}
We relate block iterative methods for the dual equation~\eqref{dual_equation} to the reflection algorithms. Using the product structure of $W$, we write
\begin{equation*}
S=D+L+L^*,
\end{equation*}
where $D=\operatorname{diag}(S_{11},\ldots,S_{NN})$ and $L\colon W^*\to W$ is block strictly lower triangular.
The block Jacobi and block Gauss--Seidel methods are given by (see, e.g.,~\cite{XZ:2017})
\begin{subequations}
\label{dual_block_iterations}
\begin{align}
\label{dual_jacobi}
p^{(n+1)}&=p^{(n)}+\omega D^{-1}(g-Sp^{(n)}),\\
\label{dual_gauss_seidel}
p^{(n+1)}&=p^{(n)}+(D+L)^{-1}(g-Sp^{(n)}),
\end{align}
\end{subequations}
where $\omega>0$ is the relaxation parameter for the block Jacobi method.

As shown in~\cite{LLS:2021}, a reflection step is the $a$-orthogonal projection onto the affine constraint set $K_i=\{v\in V:\gamma_i v=g_i\}$:
\[
\operatorname{proj}_{K_i}^{a}v
=v+\mathcal H_i(g_i-\gamma_i v),
\quad \mathcal H_i=A^{-1}\gamma_i^*S_{ii}^{-1}.
\]
Thus alternating and parallel reflections are instances of the cyclic and relaxed parallel projection methods for~\eqref{primal_energy}, as considered in~\cite{JPX:2025,XZ:2002}. The dualization relations between these projection methods and the block relaxation methods for the dual problem are established in~\cite[Theorems~5.1 and~5.7]{JPX:2025}; see also~\cite{GM:1989,Han:1988,XZ:2002} for earlier projection--block-method correspondences. These results give the following theorem.

\begin{theorem}
\label{Thm:dualization}
The alternating and parallel methods of reflections for~\eqref{primal_energy}, given in \cref{Alg:alternating_reflections,Alg:parallel_reflections}, are dualizations of the block Gauss--Seidel method~\eqref{dual_gauss_seidel} and the block Jacobi method~\eqref{dual_jacobi} for~\eqref{dual_energy}, respectively, with the same relaxation parameter in the parallel methods. More precisely, if
\[
u^{(0)}=A^{-1}\gamma^*p^{(0)},
\]
then we have
\[
u^{(n)}=A^{-1}\gamma^*p^{(n)},
\quad n\ge1.
\]
\end{theorem}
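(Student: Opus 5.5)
The plan is an induction on the (sub)step index. The key identity is $\mathcal H_i=A^{-1}\gamma_i^*S_{ii}^{-1}$. Suppose the current iterate has the form $u=A^{-1}\gamma^*q$. Then
\[
\gamma_i u=\gamma_iA^{-1}\gamma^*q=(Sq)_i .
\]
So the reflection correction is
\[
d_i=\mathcal H_i\bigl(g_i-\gamma_iu\bigr)=A^{-1}\gamma_i^*\,S_{ii}^{-1}\bigl(g-Sq\bigr)_i ,
\]
which is exactly $A^{-1}\gamma^*$ applied to the dual vector $\delta_i$. Here $\delta_i$ has $i$th block $S_{ii}^{-1}(g-Sq)_i$ and all other blocks zero. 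This uses only that $A^{-1}\gamma^*$ is linear and that $\gamma^*$ acts blockwise, $\gamma^*q=\sum_i\gamma_i^*q_i$, as in~\eqref{linear_recovery}.

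\textbf{Parallel case.} Assume $u^{(n)}=A^{-1}\gamma^*p^{(n)}$. Summing the corrections gives
\[
u^{(n+1)}=u^{(n)}+\omega\sum_i d_i^{(n)}=A^{-1}\gamma^*\bigl(p^{(n)}+\omega D^{-1}(g-Sp^{(n)})\bigr).
\]
The reason is that the collection of blocks $S_{ii}^{-1}(g-Sp^{(n)})_i$ is precisely $D^{-1}(g-Sp^{(n)})$. The right-hand side is $A^{-1}\gamma^*p^{(n+1)}$ with $p^{(n+1)}$ given by~\eqref{dual_jacobi}, and induction closes this case.

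\textbf{Alternating case.} Here I will use substeps. Define $p^{(n+\frac iN)}$ from $p^{(n+\frac{i-1}N)}$ by correcting only block $i$:
\[
p^{(n+\frac iN)}_i=p^{(n+\frac{i-1}N)}_i+S_{ii}^{-1}\bigl(g-Sp^{(n+\frac{i-1}N)}\bigr)_i .
\]
The identity above then gives $u^{(n+\frac iN)}=A^{-1}\gamma^*p^{(n+\frac iN)}$ at every substep.

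It remains to check that one full sweep of these single-block corrections equals the Gauss--Seidel step~\eqref{dual_gauss_seidel}. Let $e=p^{(n+1)}-p^{(n)}$. When block $i$ is updated, blocks $j<i$ already carry $e_j$ and blocks $j>i$ are still unchanged. The $i$th update therefore reads
\[
S_{ii}e_i=g_i-\sum_{j}S_{ij}p^{(n)}_j-\sum_{j<i}S_{ij}e_j ,
\]
that is, $(D+L)e=g-Sp^{(n)}$. This is the standard forward-substitution identity, and it is where I expect the only real bookkeeping effort. The care needed is to keep the lower-triangular convention consistent with $L$ being the strictly lower part, since $L^*$ is the strictly upper part.

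Finally, the claimed initialization $u^{(0)}=A^{-1}\gamma^*p^{(0)}$ lies in $H$, as required by the algorithms. Indeed, $A^{-1}\gamma^*q$ is weakly harmonic away from $\bigcup_i\Gamma_i$, because $\gamma^*q$ annihilates test functions supported away from the $\Gamma_i$. This settles the base case. The first relation in~\eqref{primal_dual} holds with $F'(u)=Au$, since $-B^*p^{(n)}=\gamma^*p^{(n)}=Au^{(n)}$, so the iterates satisfy the definition of a dualization.
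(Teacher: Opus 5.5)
Your proof is correct, but it takes a different route from the paper, which does not carry out this computation. The paper recalls from~\cite{LLS:2021} that a reflection step is the $a$-orthogonal projection onto the affine set $K_i=\{v\in V:\gamma_iv=g_i\}$. Hence \cref{Alg:alternating_reflections,Alg:parallel_reflections} are the cyclic and relaxed parallel projection methods for~\eqref{primal_energy}, and the theorem follows from the general dualization results~\cite[Theorems~5.1 and~5.7]{JPX:2025} relating such projection methods to block relaxation methods for the dual problem.

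You instead verify the correspondence directly. The formula $\mathcal H_i=A^{-1}\gamma_i^*S_{ii}^{-1}$ and the identity $\gamma_iA^{-1}\gamma^*q=(Sq)_i$ turn each reflection into a single-block dual correction. Forward substitution then assembles the $N$ substeps of a sweep into $(D+L)e=g-Sp^{(n)}$. Your argument is self-contained and establishes the relation at every substep $u^{(n+\frac iN)}$. It also checks two side conditions the paper leaves implicit: $u^{(0)}\in H$, and the equivalence of $u^{(n)}=A^{-1}\gamma^*p^{(n)}$ with $-B^*p^{(n)}\in\partial F(u^{(n)})$.

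What your argument uses essentially is the linearity of the recovery map and of the corrections, that is, the quadratic energy with affine constraints. The cited framework is formulated for the general pair~\eqref{general_primal}--\eqref{general_dual}, where the primal--dual recovery need not be linear. That generality is what the paper relies on when it proposes extensions to contact and friction problems.
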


By \cref{Prop:dual}, the corresponding primal and dual errors satisfy
\begin{equation}
\label{energy_error}
\|u^{(n)}-u\|_A^2
=\langle p^{(n)}-p,S(p^{(n)}-p)\rangle
=\|p^{(n)}-p\|_S^2.
\end{equation}
Consequently, every dual error estimate in the $S$-norm gives the same primal energy-error estimate. See also~\cite[Theorem~3.5]{JPX:2025} for the general convergence-transfer result.

The equivalence in \cref{Thm:dualization} also allows the parallel method of reflections to be used as a preconditioner. For a current primal iterate $v=u(q)$, let $r=g-\gamma v=g-Sq\in W$ be its boundary discrepancy. The block Jacobi preconditioning step computes $D^{-1}r$, whose corresponding primal correction is given in \cref{Alg:parallel_correction}. The use of reflections as preconditioners for Krylov methods is also discussed in~\cite{CGHS:2019}.

\begin{algorithm}
\caption{Parallel reflection correction for boundary discrepancy $r\in W$}
\label{Alg:parallel_correction}
\begin{algorithmic}
\FOR{$i=1,2,\ldots,N$ \textbf{in parallel}}
  \STATE Compute the correction using $\mathcal H_i$ (see~\eqref{harmonic_extension}):
\vspace{-0.15cm}
\[
d_i=\mathcal H_i r_i.
\]
\vspace{-0.35cm}
\ENDFOR
\STATE Return the combined correction
\vspace{-0.15cm}
\[
d=\sum_{i=1}^N d_i.
\]
\vspace{-0.35cm}
\end{algorithmic}
\end{algorithm}

Indeed, the correction given in \cref{Alg:parallel_correction} satisfies
\[
d=\sum_{i=1}^N A^{-1}\gamma_i^*S_{ii}^{-1}r_i=u(D^{-1}r).
\]
Since $D^{-1}$ is SPD, this correction can be combined with the scalar updates of the conjugate gradient method. The primal and dual errors are related by~\eqref{energy_error}.

\begin{remark}
\label{Rem:boundary_formulation}
The boundary formulation introduced in~\cite{CGHS:2019} identifies alternating and parallel reflections with the block Gauss--Seidel and block Jacobi methods, respectively. It is related to the dual problem~\eqref{dual_energy} by the block diagonal change of variables $w=Dp$. Here $w_i=S_{ii}p_i$ is the trace of $A^{-1}\gamma_i^*p_i$ on $\Gamma_i$. The component traces satisfy
\begin{equation}
\label{boundary_system}
SD^{-1}w=g,
\quad (SD^{-1})_{ij}=\gamma_i\mathcal H_j,
\end{equation}
which is the boundary system in~\cite[Equations~(3.13)--(3.14)]{CGHS:2019}. The same change of variables carries the block iterations~\eqref{dual_block_iterations} to the corresponding iterations for~\eqref{boundary_system}.

\end{remark}

\section{Geometric assumptions and one-level methods}
\label{Sec:OneLevel}
In this section, we analyze how the convergence rates of the methods of reflections depend on the geometry of the perforated domain, in particular on the number, sizes, and separation of the sets $O_i$. By \cref{Thm:dualization}, these methods are equivalent to the corresponding block methods for the dual problem, and~\eqref{energy_error} identifies their error norms. It therefore suffices to analyze the block methods. We first formulate the geometric assumptions required for the analysis and then derive convergence estimates under these assumptions.

\subsection{Geometric assumptions}
\label{Subsec:geometry}
To investigate the influence of the domain geometry on convergence, we first describe its relevant properties mathematically; see, e.g.,~\cite{RS:2021} for this approach in the analysis of Schwarz methods. We require each $O_i$ to lie in a ball that can be enlarged by a fixed factor while remaining inside $\Omega$ and disjoint from the other enlarged balls. This condition expresses separation relative to the sizes of the sets. It covers regular arrays of well-separated holes and their perturbations, with unequal sizes and nonspherical shapes permitted.

\begin{assumption}
\label{Ass:neutral}
There are centers $x_i$, radii $a_i>0$, and a fixed $\beta>1$, independent of $N$, such that
\begin{equation*}
\overline{O}_i\subseteq\overline{B(x_i,a_i)},
\quad U_i=B(x_i,\beta a_i)\subset\Omega,
\quad U_i\cap U_j=\emptyset\quad(i\ne j).
\end{equation*}
We write $a_{\min}=\min_{1 \leq i \leq N} a_i$.
\end{assumption}

\Cref{Fig:particle_arrays} illustrates a two-dimensional configuration satisfying \cref{Ass:neutral}. More generally, equal disks or spheres of radius $a$, placed on a square or cubic lattice of spacing $s$, have disjoint enlarged balls whenever $2\beta a<s$. Choosing the ambient domain so that each center has distance greater than $\beta a$ from its boundary ensures that these enlarged balls are also contained in $\Omega$. The dashed circles show these separating neighborhoods.

\begin{figure}
\centering
\begin{tikzpicture}[scale=1.1,every node/.style={font=\small}]
  \path[use as bounding box] (-2.25,-2.24) rectangle (2.25,2.25);
  \draw[line width=0.65pt] (0,0) circle (2.02);
  \foreach \x in {-1.125,-0.375,0.375,1.125}{\foreach \y in {-1.125,-0.375,0.375,1.125}{
    \draw[dashed,black!45,line width=0.35pt] (\x,\y) circle (0.24);
    \filldraw[fill=black!18,line width=0.45pt] (\x,\y) circle (0.16);
  }}
  
  \draw[thin] (-0.375,-1.38)--(-0.375,-1.68);
  \draw[thin] (0.375,-1.38)--(0.375,-1.68);
  \draw[<->,thin] (-0.375,-1.59)--node[below] {$s$}(0.375,-1.59);
  \draw[thin] (1.125,-1.125)--(1.238,-1.012)--(1.53,-0.89) node[right] {$a$};
  \node at (-1.47,1.58) {$\Omega$};
  
\end{tikzpicture}
\caption{A square array of disks satisfying \cref{Ass:neutral}. The sets $O_i$ have radius $a$ and lattice spacing $s$, and the outer boundary is $\partial\Omega$. The dashed circles indicate disjoint enlarged neighborhoods contained in $\Omega$.}
\label{Fig:particle_arrays}
\end{figure}

We first present a useful estimate in \cref{Lem:local_L2}, which is a local consequence of Sobolev embeddings. Similar results can be found in the domain decomposition literature; see, e.g.,~\cite[Lemmas~4.15 and~4.16]{TW:2005}.

\begin{lemma}
\label{Lem:local_L2}
Suppose that \cref{Ass:neutral} holds. For every $v\in H_0^1(\Omega)$ and $1\le i\le N$, we have
\begin{equation}
\label{local_L2}
\|v\|_{L^2(U_i)}^2\lesssim
\begin{cases}
\displaystyle a_i^2\left(1+\log\frac{\operatorname{diam}\Omega}{a_i}\right)|v|_{H^1(\Omega)}^2,&d=2,\\
\displaystyle a_i^2|v|_{H^1(\Omega)}^2,&d=3.
\end{cases}
\end{equation}
The implicit constant depends only on $\beta$.
\end{lemma}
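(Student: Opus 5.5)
Since $v\in H_0^1(\Omega)$, we extend $v$ by zero to all of $\mathbb R^d$, giving a function in $H^1(\mathbb R^d)$ with the same seminorm. Set $R=\operatorname{diam}\Omega$ and $r_i=\beta a_i$, so $U_i=B(x_i,r_i)$ and $r_i\le R$ because $U_i\subset\Omega$. The argument treats $d=3$ and $d=2$ separately. In both cases only scaling and embedding constants enter, so the implicit constant will depend only on $\beta$.

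\emph{Case $d=3$.} We use the Sobolev inequality $\|v\|_{L^6(\mathbb R^3)}\lesssim|v|_{H^1(\mathbb R^3)}$, valid for the zero extension since it has compact support. Hölder's inequality on $U_i$ then gives
\[
\|v\|_{L^2(U_i)}^2\le|U_i|^{2/3}\|v\|_{L^6(U_i)}^2\lesssim(\beta a_i)^2|v|_{H^1(\Omega)}^2 .
\]
This already has the required form, and no logarithm appears.

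\emph{Case $d=2$.} There is no scale-invariant embedding into $L^\infty$, so we compare mean values across scales.
\begin{enumerate}
\item Let $B_0=B(x_i,r_i)$ and let $\bar v_{B}$ denote the mean over a ball $B$. Poincaré's inequality on $B_0$ gives
\[
\|v\|_{L^2(B_0)}^2\lesssim r_i^2|v|_{H^1(B_0)}^2+r_i^2|\bar v_{B_0}|^2 .
\]
It therefore suffices to bound $|\bar v_{B_0}|^2\lesssim(1+\log(R/r_i))|v|_{H^1}^2$.
\item Consider the dyadic balls $B_k=B(x_i,2^kr_i)$ for $k=0,\dots,K$, where $K$ is the first index with $2^Kr_i\ge 2R$. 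Then $K\lesssim 1+\log(R/r_i)$. Since $B_{k}\subset B_{k+1}$ with comparable measures, a scaled Poincaré inequality gives $|\bar v_{B_k}-\bar v_{B_{k+1}}|\lesssim|v|_{H^1(B_{k+1})}$. This bound is uniform in $k$ precisely because the gradient norm is scale-invariant in two dimensions.
\item Summing this telescoping chain with Cauchy--Schwarz gives
\[
|\bar v_{B_0}-\bar v_{B_K}|^2\lesssim K\sum_k|v|_{H^1(B_{k+1})}^2 .
\]
Each ball is counted once, but the balls are nested rather than disjoint, so this sum can exceed $|v|_{H^1}^2$ by a factor of $K$. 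To avoid this, we replace the balls by the disjoint annuli $A_k=B_{k+1}\setminus B_k$ and compare means over consecutive annuli. Adjacent annuli lie inside a common annulus of comparable shape, where a scaled Poincaré inequality holds. The sum of $|v|_{H^1(A_k\cup A_{k+1})}^2$ over $k$ is then $\le 2|v|_{H^1}^2$. The first link, between $\bar v_{B_0}$ and $\bar v_{A_0}$, uses Poincaré on $B_1$.
\item At the outermost scale, the ball $B_K$ has radius at least $2R$ and contains $\Omega$. The zero extension therefore vanishes on a fixed fraction of $B_K$, for example on $B_K\setminus\Omega$ or the outermost annulus. A Poincaré–Friedrichs inequality for functions vanishing on a set of proportional measure then gives $|\bar v_{A_{K-1}}|\lesssim|v|_{H^1(B_K)}$.
\end{enumerate}
Combining these steps yields $|\bar v_{B_0}|^2\lesssim(1+K)|v|_{H^1(\Omega)}^2$. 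This gives \eqref{local_L2} with $r_i=\beta a_i$ and $\log(R/r_i)\le\log(R/a_i)$.

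\emph{Main obstacle.} The delicate point is obtaining a single factor $\log$ rather than $\log^2$ in the two-dimensional chaining. This is handled by the disjoint-annuli Cauchy--Schwarz argument above, so that the $K$ scales contribute only through the one factor of $K$ from Cauchy--Schwarz. An alternative that avoids chaining is the discrete-Sobolev-type inequality of \cite[Lemmas~4.15 and~4.16]{TW:2005} applied on $B(x_i,2R)$ after scaling, and the result could be quoted from there instead.
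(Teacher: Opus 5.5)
Your proof is correct. For $d=3$ it coincides with the paper's argument: H\"older on $U_i$ combined with the $L^6$ Sobolev inequality for the zero extension.

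For $d=2$ you take a genuinely different route. The paper extends $v$ by zero and quotes the embedding $\|v\|_{L^t}\lesssim\sqrt t\,(\operatorname{diam}\Omega)^{2/t}|v|_{H^1(\Omega)}$ with a $t$-independent constant. This is equation~(4.17) of \cite{TW:2005}, combined with scaling and the Friedrichs inequality. H\"older on $U_i$ then gives the factor $a_i\sqrt t\,(\operatorname{diam}\Omega/a_i)^{2/t}$. Choosing $t=2(1+\log(\operatorname{diam}\Omega/a_i))$ produces the logarithm by optimizing the integrability exponent.

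You instead reduce to bounding the mean $\bar v_{B_0}$. You then chain averages over $K\lesssim 1+\log(\operatorname{diam}\Omega/a_i)$ dyadic annuli, using only scaled Poincar\'e inequalities on a ball and on an annulus of fixed radius ratio. The disjointness of the annuli together with a single Cauchy--Schwarz step correctly gives one logarithmic factor rather than two.

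The paper's version is shorter but uses the Trudinger-type $\sqrt t$ growth of the embedding constant as a black box. Yours is self-contained. It also makes explicit that the logarithm counts the dyadic scales between $a_i$ and $\operatorname{diam}\Omega$, across which the two-dimensional Dirichlet energy is scale invariant.

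One small simplification is available. Since $2^{K-1}r_i\ge\operatorname{diam}\Omega$ and $\Omega\subset\overline{B(x_i,\operatorname{diam}\Omega)}$, the zero extension vanishes almost everywhere on $A_{K-1}$. Hence $\bar v_{A_{K-1}}=0$, and the Poincar\'e--Friedrichs step at the outermost scale is not needed.
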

\begin{proof}
Extend $v$ by zero outside $\Omega$. For $d=2$, apply the Sobolev embedding~\cite[equation~(4.17)]{TW:2005} on a ball containing $\Omega$ with radius $\operatorname{diam}\Omega$. A scaling argument~\cite[Section~3.4]{TW:2005} and the Friedrichs inequality yield
\[
\|v\|_{L^t(\Omega)}
\lesssim\sqrt t\,(\operatorname{diam}\Omega)^{\frac{2}{t}}|v|_{H^1(\Omega)},
\quad t\ge2,
\]
with a constant independent of $t$. Hence the H\"older inequality gives
\[
\|v\|_{L^2(U_i)}
\le |U_i|^{\frac{1}{2}-\frac{1}{t}}\|v\|_{L^t(\Omega)}
\lesssim a_i\sqrt t\left(\frac{\operatorname{diam}\Omega}{a_i}\right)^{\frac{2}{t}}|v|_{H^1(\Omega)}.
\]
Choosing $t=2(1+\log\frac{\operatorname{diam}\Omega}{a_i})$ proves the two-dimensional estimate in~\eqref{local_L2} (cf.~\cite{Xu:1989}).

For $d=3$, the H\"older inequality and the Sobolev embedding give
\[
\|v\|_{L^2(U_i)}
\le |U_i|^{\frac{1}{3}}\|v\|_{L^6(\mathbb R^3)}
\lesssim a_i|v|_{H^1(\Omega)}.
\]
This proves~\eqref{local_L2} for $d=3$ and completes the proof.
\end{proof}

\subsection{One-level methods}
\label{Subsec:onelevel}
We now apply the geometric estimates to the block methods~\eqref{dual_block_iterations}. Let $I_i\colon H^{-\frac{1}{2}}(\Gamma_i)\to W^*$ be the canonical injection. Its dual adjoint $I_i^*\colon W\to H^{\frac{1}{2}}(\Gamma_i)$ extracts the $i$th component. The block diagonal operator $D=\operatorname{diag}(S_{11},\ldots,S_{NN})$ satisfies
\[
\|q\|_D^2=\sum_{i=1}^N\|I_iq_i\|_S^2=\sum_{i=1}^N\|q_i\|_{S_{ii}}^2,
\quad q\in W^*.
\]
The following lemma compares the $S$- and $D$-norms. Similar arguments can be found in~\cite[Lemmas~4.7 and~4.8]{HS:2021}.

\begin{lemma}
\label{Lem:full_energy}
Suppose that \cref{Ass:neutral} holds. For every $q\in W^*$, we have
\begin{subequations}

\begin{align}
\|q\|_S^2&\lesssim
\begin{cases}
\displaystyle N\|q\|_D^2,&d=2,\\
\displaystyle N^{\frac{2}{3}}\|q\|_D^2,&d=3,
\end{cases}
\label{full_energy_bounds}\\
\|q\|_D^2&\lesssim
\begin{cases}
\displaystyle \left(1+\log\frac{\operatorname{diam}\Omega}{a_{\min}}\right)\|q\|_S^2,&d=2,\\
\displaystyle \|q\|_S^2,&d=3.
\end{cases}
\label{full_energy_lower}
\end{align}
\end{subequations}
The implicit constants depend only on $\beta$.
\end{lemma}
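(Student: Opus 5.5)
We work throughout in the primal variables. For $q\in W^*$ set $u_i=A^{-1}\gamma_i^*q_i$ and $u=u(q)=\sum_{i}u_i$. By \cref{Prop:dual}, $\|q\|_S^2=\|u\|_A^2$ and $\|q\|_D^2=\sum_i\|u_i\|_A^2$. The basic tool is a cutoff $\chi_i\in W^{1,\infty}$ with $\chi_i=1$ on $B(x_i,a_i)$, $\operatorname{supp}\chi_i\subset\overline{U_i}$, $0\le\chi_i\le1$ and $|\nabla\chi_i|\lesssim a_i^{-1}$, with constant depending only on $\beta$. For every $v\in V$ we then have $\chi_iv\in V$ and $\gamma_i(\chi_iv)=\gamma_iv$. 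Since the $U_j$ are disjoint, $\gamma_j(\chi_iv)=0$ for $j\ne i$. The product rule together with \cref{Lem:local_L2} gives
\[
|\chi_iv|_{H^1(\Omega)}^2\lesssim |v|_{H^1(U_i)}^2+a_i^{-2}\|v\|_{L^2(U_i)}^2 .
\]

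\emph{Upper bound \eqref{full_energy_bounds}.} For $d=2$ we use only Cauchy--Schwarz: $\|\sum_iu_i\|_A^2\le N\sum_i\|u_i\|_A^2$.

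For $d=3$ we write
\[
\|u\|_A^2=\langle\gamma^*q,u\rangle=\sum_i\langle q_i,\gamma_iu\rangle=\sum_i\langle q_i,\gamma_i(\chi_iu)\rangle=\sum_i a(u_i,\chi_iu),
\]
so that
\[
\|u\|_A^2\le\|q\|_D\Bigl(\sum_i|\chi_iu|_{H^1}^2\Bigr)^{1/2}.
\]
By disjointness, $\sum_i|u|_{H^1(U_i)}^2\le\|u\|_A^2$. For the $L^2$ terms we do not use \cref{Lem:local_L2} term by term, since that would lose a factor $N$. Instead, H\"older on $U_i$ gives $a_i^{-2}\|u\|_{L^2(U_i)}^2\lesssim\|u\|_{L^6(U_i)}^2$. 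Then discrete H\"older over $i$ gives
\[
\sum_i\|u\|_{L^6(U_i)}^2\le N^{2/3}\Bigl(\sum_i\|u\|_{L^6(U_i)}^6\Bigr)^{1/3}\le N^{2/3}\|u\|_{L^6}^2\lesssim N^{2/3}\|u\|_A^2,
\]
where the last step is the Sobolev embedding. Combining these, $\|u\|_A^2\lesssim\|q\|_D\,N^{1/3}\|u\|_A$, and squaring yields the bound. The main point to check is that this discrete H\"older step is the right way to gain $N^{2/3}$ rather than $N$.

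\emph{Lower bound \eqref{full_energy_lower}.} We use the dual characterization
\[
\|q_i\|_{S_{ii}}=\|\gamma_i^*q_i\|_{A^{-1}}=\sup_{v\in V}\frac{\langle q_i,\gamma_iv\rangle}{\|v\|_A}.
\]
For $v\in V$, the trace properties of $\chi_iv$ give $\langle q_i,\gamma_iv\rangle=\langle\gamma^*q,\chi_iv\rangle=a(u,\chi_iv)$. Since $\chi_iv$ is supported in $\overline{U_i}$, this is at most $|u|_{H^1(U_i)}|\chi_iv|_{H^1}$. By the cutoff estimate and \cref{Lem:local_L2},
\[
|\chi_iv|_{H^1}^2\lesssim\Bigl(1+\log\tfrac{\operatorname{diam}\Omega}{a_i}\Bigr)\|v\|_A^2\quad(d=2),
\qquad |\chi_iv|_{H^1}^2\lesssim\|v\|_A^2\quad(d=3).
\]
Hence $\|q_i\|_{S_{ii}}^2\lesssim(1+\log\frac{\operatorname{diam}\Omega}{a_{\min}})\,|u|_{H^1(U_i)}^2$ in two dimensions, and the same bound without the logarithm holds in three. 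Summing over $i$ and using disjointness of the $U_i$ gives $\|q\|_D^2\lesssim(\cdot)\,\|u\|_A^2=(\cdot)\,\|q\|_S^2$. All constants depend only on $\beta$, through the cutoff and \cref{Lem:local_L2}.
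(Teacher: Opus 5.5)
Your proof is correct and follows essentially the same route as the paper. You use the same cutoffs $\chi_i$ and product-rule bound, then H\"older on each $U_i$ plus the Sobolev embedding for $d=3$ (the paper leaves your discrete H\"older step implicit), and \cref{Lem:local_L2} with disjointness of the $U_i$ for the lower bound. The differences are cosmetic: you insert the maximizer $v=u(q)$ where the paper takes a supremum over $v$, and you take the supremum in $\|q_i\|_{S_{ii}}=\sup_v\langle q_i,\gamma_iv\rangle/\|v\|_A$ where the paper inserts the maximizer, testing $u(q)$ against $\chi_iu_i$. Also, the cutoff estimate itself needs only the product rule; \cref{Lem:local_L2} enters only when you bound $a_i^{-2}\|v\|_{L^2(U_i)}^2$.
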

\begin{proof}
Set $u_i=A^{-1}\gamma_i^*q_i$. The bound $\|q\|_S^2\le N\|q\|_D^2$ follows directly from the Cauchy--Schwarz inequality and proves~\eqref{full_energy_bounds} for $d=2$. For $d=3$, we obtain the sharper factor $N^{\frac{2}{3}}$ as follows.

Choose smooth cutoffs $\chi_i$ equal to one on $B(x_i,a_i)$ and supported in $U_i$, with $0\le\chi_i\le1$ and $|\nabla\chi_i|\lesssim a_i^{-1}$. Their existence follows from the standard mollification construction~\cite{Evans:2010} and scaling. For every $v\in H^1(U_i)$, the product rule gives
\begin{equation}
\label{cutoff_localization}
\begin{aligned}
|\chi_i v|_{H^1(U_i)}^2
&=\|\chi_i\nabla v+v\nabla\chi_i\|_{L^2(U_i)}^2\\
&\le2\|\chi_i\nabla v\|_{L^2(U_i)}^2+2\|v\nabla\chi_i\|_{L^2(U_i)}^2\\
&\lesssim|v|_{H^1(U_i)}^2+a_i^{-2}\|v\|_{L^2(U_i)}^2.
\end{aligned}
\end{equation}

For $d=3$, \eqref{cutoff_localization}, disjointness of the neighborhoods, the H\"older inequality, and the Sobolev embedding give
\[
\begin{aligned}
\sum_{i=1}^N|\chi_i v|_{H^1(U_i)}^2
&\lesssim |v|_{H^1(\Omega)}^2+\sum_{i=1}^N a_i^{-2}\|v\|_{L^2(U_i)}^2\\
&\lesssim |v|_{H^1(\Omega)}^2+\sum_{i=1}^N\|v\|_{L^6(U_i)}^2
\lesssim N^{\frac{2}{3}}|v|_{H^1(\Omega)}^2,
\quad v\in V.
\end{aligned}
\]
Since $\gamma_i(\chi_i v)=\gamma_i v$, we have
\begin{equation}
\label{localized_duality}
|\langle q,\gamma v\rangle|
=\left|\sum_{i=1}^N a(u_i,\chi_i v)\right|
\le\|q\|_D\left(\sum_{i=1}^N|\chi_i v|_{H^1(U_i)}^2\right)^{\frac{1}{2}}.
\end{equation}
Taking the supremum over $\|v\|_A=1$ proves~\eqref{full_energy_bounds} for $d=3$.

To prove~\eqref{full_energy_lower}, we combine~\eqref{cutoff_localization} and \cref{Lem:local_L2} to obtain
\[
|\chi_i u_i|_{H^1(U_i)}^2\lesssim
\begin{cases}
\displaystyle \left(1+\log\frac{\operatorname{diam}\Omega}{a_i}\right)\|q_i\|_{S_{ii}}^2,&d=2,\\
\displaystyle \|q_i\|_{S_{ii}}^2,&d=3.
\end{cases}
\]
Since the supports of the $\chi_i u_i$ are disjoint and $\gamma_i(\chi_i u_i)=S_{ii}q_i$, we have
\begin{equation}
\label{localized_energy}
\|q\|_D^2
=\sum_{i=1}^N a(u(q),\chi_i u_i)
\le\|q\|_S\left(\sum_{i=1}^N|\chi_i u_i|_{H^1(U_i)}^2\right)^{\frac{1}{2}}.
\end{equation}
Combining these inequalities proves~\eqref{full_energy_lower} and completes the proof.
\end{proof}

Since $D^{-1}$ is SPD, it can be used as a preconditioner for the conjugate gradient method; see~\cite{GHS:2014} for discussions of the conjugate gradient method in Hilbert spaces. We therefore estimate the extremal spectral values of $D^{-1}S$ and its condition number. The operator $D^{-1}S$ is self-adjoint and coercive in both the $D$- and $S$-inner products.

From \cref{Lem:full_energy}, we have the following theorem.

\begin{theorem}
\label{Thm:one_jacobi}
Suppose that \cref{Ass:neutral} holds. Then we have
\begin{equation*}
\inf\sigma(D^{-1}S)\gtrsim
\begin{cases}
\displaystyle \left(1+\log\frac{\operatorname{diam}\Omega}{a_{\min}}\right)^{-1},&d=2,\\
\displaystyle 1,&d=3,
\end{cases}
\quad
\sup\sigma(D^{-1}S)\lesssim
\begin{cases}
\displaystyle N,&d=2,\\
\displaystyle N^{\frac{2}{3}},&d=3.
\end{cases}
\end{equation*}
Moreover, $\inf\sigma(D^{-1}S)\le1\le\sup\sigma(D^{-1}S)$. Consequently, we have
\begin{equation*}
\kappa(D^{-1}S)\lesssim
\begin{cases}
\displaystyle N\left(1+\log\frac{\operatorname{diam}\Omega}{a_{\min}}\right),&d=2,\\
\displaystyle N^{\frac{2}{3}},&d=3.
\end{cases}
\end{equation*}
The implicit constants depend only on $\beta$.
\end{theorem}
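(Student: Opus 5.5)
The plan is to treat $D^{-1}S$ as an operator on $W^*$ that is self-adjoint in the $D$-inner product. Then $\inf\sigma(D^{-1}S)$ and $\sup\sigma(D^{-1}S)$ equal the infimum and supremum of the Rayleigh quotient
\[
\frac{(D^{-1}Sq,q)_D}{\|q\|_D^2}=\frac{\langle Sq,q\rangle}{\langle Dq,q\rangle}=\frac{\|q\|_S^2}{\|q\|_D^2},
\quad 0\ne q\in W^*.
\]
Self-adjointness holds because $(D^{-1}Sq,r)_D=\langle Sq,r\rangle$ is symmetric in $q,r$. Since $D$ and $S$ are both SPD by \cref{Prop:dual}, $D^{-1}S$ is bounded and coercive. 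For a self-adjoint operator on a Hilbert space, the extremal points of the spectrum are the infimum and supremum of the numerical range. This identification is the only step needing a little care, since the spaces are infinite-dimensional. I will state it via the standard characterization of the spectrum of bounded self-adjoint operators.

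With the Rayleigh quotient in hand, the two spectral bounds follow directly from \cref{Lem:full_energy}. The upper bound \eqref{full_energy_bounds} gives $\|q\|_S^2/\|q\|_D^2\lesssim N$ for $d=2$ and $\lesssim N^{2/3}$ for $d=3$, which bounds $\sup\sigma$. The lower bound \eqref{full_energy_lower}, inverted, gives $\|q\|_S^2/\|q\|_D^2\gtrsim(1+\log\frac{\operatorname{diam}\Omega}{a_{\min}})^{-1}$ for $d=2$ and $\gtrsim1$ for $d=3$, which bounds $\inf\sigma$. All implicit constants are inherited from the lemma and hence depend only on $\beta$.

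For the inequality $\inf\sigma\le1\le\sup\sigma$, I test the Rayleigh quotient with a single-block vector $q=I_iq_i$, where $q_i\ne0$. Then $\|q\|_S^2=\langle q_i,S_{ii}q_i\rangle=\|q\|_D^2$, so the quotient equals $1$. Since the quotient attains the value $1$, the infimum is at most $1$ and the supremum is at least $1$.

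Finally, the condition number bound is the ratio of the two spectral estimates, $\kappa(D^{-1}S)=\sup\sigma/\inf\sigma$. This gives $N(1+\log\frac{\operatorname{diam}\Omega}{a_{\min}})$ for $d=2$ and $N^{2/3}$ for $d=3$. No step is a genuine obstacle: the substantive work is already contained in \cref{Lem:full_energy}.
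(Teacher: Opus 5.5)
Your proposal is correct and follows essentially the same route as the paper, which obtains the theorem directly from \cref{Lem:full_energy} through the Rayleigh quotient $\|q\|_S^2/\|q\|_D^2$ of the $D$-self-adjoint operator $D^{-1}S$. Your single-block test vectors $I_iq_i$, for which $\|I_iq_i\|_S=\|I_iq_i\|_D$, correctly supply the inequality $\inf\sigma(D^{-1}S)\le1\le\sup\sigma(D^{-1}S)$, which the paper states without writing out the argument.
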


\begin{remark}
\label{Rem:array_scaling}
\Cref{Thm:one_jacobi} gives an upper bound that depends on $N$. The condition number actually grows with $N$ for simple arrays such as the one in \cref{Fig:particle_arrays}. For example, let $N=m^d$, $m\ge2$, and take balls of radius $a$ centered at
\[
x_i\in\left\{s\left(k-\frac{m+1}{2}\right)\colon k=1,\ldots,m\right\}^d,
\quad 2\beta a<s.
\]
Choose $\Omega=B(0,ms)$ in both dimensions. These configurations satisfy \cref{Ass:neutral}, and we have
\begin{equation*}
\kappa(D^{-1}S)\ge\sup\sigma(D^{-1}S)\gtrsim
\begin{cases}
\displaystyle N\left(1+\log\frac{\operatorname{diam}\Omega}{a}\right)^{-1},&d=2,\\
\displaystyle \frac{Na}{\operatorname{diam}\Omega},&d=3.
\end{cases}
\end{equation*}
Since $\operatorname{diam}\Omega\eqsim sN^{\frac{1}{d}}$ for these particular arrays, fixing $\frac{a}{s}$ yields
\begin{equation}
\label{charge_growth_arrays}
\kappa(D^{-1}S)\gtrsim
\begin{cases}
\displaystyle N\bigl(1+\log N\bigr)^{-1},&d=2,\\
\displaystyle N^{\frac{2}{3}},&d=3.
\end{cases}
\end{equation}
The constants in~\eqref{charge_growth_arrays} may depend on the fixed ratio $\frac{a}{s}$. Thus the three-dimensional upper bound in \cref{Thm:one_jacobi} has the correct order in $N$.
\end{remark}

\begin{remark}
\label{Rem:one_jacobi_contraction}
The spectral theorem gives
\[
\|I-\omega D^{-1}S\|_S=\max\bigl\{|1-\omega\inf\sigma(D^{-1}S)|,\,|1-\omega\sup\sigma(D^{-1}S)|\bigr\}<1
\]
for $0<\omega<\frac{2}{\sup\sigma(D^{-1}S)}$. Hence the spectral estimates in \cref{Thm:one_jacobi} also give contraction estimates for \cref{Alg:parallel_reflections} through~\eqref{energy_error}.
\end{remark}

The preconditioner for the block Gauss--Seidel method is defined by
\begin{equation*}
B_{\mathrm{GS}}=(D+L)^{-1}.
\end{equation*}
The error propagation operator is $I-B_{\mathrm{GS}}S$. We estimate its energy norm.

\begin{theorem}
\label{Thm:one_gauss_seidel}
Suppose that \cref{Ass:neutral} holds. For $B_{\mathrm{GS}}=(D+L)^{-1}$, we have
\begin{equation*}
\|I-B_{\mathrm{GS}}S\|_S^2\le
\begin{cases}
\displaystyle 1-\left(1+C_\beta N^2\left(1+\log\frac{\operatorname{diam}\Omega}{a_{\min}}\right)\right)^{-1},&d=2,\\
\displaystyle 1-\bigl(1+C_\beta N^2\bigr)^{-1},&d=3,
\end{cases}
\end{equation*}
where $C_\beta>0$ depends only on $\beta$.
\end{theorem}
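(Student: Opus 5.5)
The plan is to use the Xu--Zikatanov identity for successive subspace corrections with exact local solvers. The space $W^*$ is decomposed as $W^*=\sum_{i=1}^N I_iH^{-\frac12}(\Gamma_i)$, and on each subspace the exact solver $S_{ii}^{-1}$ is used. Block Gauss--Seidel is then the multiplicative Schwarz method, so
\[
\|I-B_{\mathrm{GS}}S\|_S^2=1-\frac{1}{1+c_0},
\quad
c_0=\sup_{\|q\|_S=1}\inf_{\sum_i I_iq_i=q}\sum_{i=1}^N\Bigl\|P_i\sum_{j>i}I_jq_j\Bigr\|_S^2,
\]
where $P_i$ is the $S$-orthogonal projection onto $I_iH^{-\frac12}(\Gamma_i)$. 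Because the decomposition is a direct product, the only admissible decomposition is $q_i=I_i^*$-component of $q$. The infimum therefore disappears, and it remains to bound
\[
c_0\le\sup_{\|q\|_S=1}\sum_{i=1}^N\Bigl\|P_i\sum_{j>i}I_jq_j\Bigr\|_S^2 .
\]

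The first step is the bound $\|P_iw\|_S\le\|w\|_S$. Together with the triangle inequality this gives
\[
\Bigl\|\sum_{j>i}I_jq_j\Bigr\|_S\le\sum_{j>i}\|q_j\|_{S_{jj}}\le\sqrt{N}\,\|q\|_D .
\]
Summing over $i$ yields $c_0\le N^2\|q\|_D^2$. The second step invokes \eqref{full_energy_lower} of \cref{Lem:full_energy}, namely $\|q\|_D^2\lesssim(1+\log\frac{\operatorname{diam}\Omega}{a_{\min}})\|q\|_S^2$ for $d=2$ and $\|q\|_D^2\lesssim\|q\|_S^2$ for $d=3$. This gives $c_0\le C_\beta N^2(1+\log\frac{\operatorname{diam}\Omega}{a_{\min}})$ in 2D and $c_0\le C_\beta N^2$ in 3D.

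Finally, $1-(1+c_0)^{-1}$ is increasing in $c_0$, so substituting the upper bound for $c_0$ gives the stated estimate.

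The one point needing care is to state the Xu--Zikatanov identity correctly in this Hilbert setting. The subspaces must be closed and their sum must be $W^*$; both hold since $W^*$ is a finite product. The local solvers must be exact, which means the local correction is $P_i$; this holds because $S_{ii}^{-1}$ is applied to the restricted residual. The ordering convention ($j>i$ versus $j<i$) should be matched to $(D+L)^{-1}$, but it does not affect the bound.

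A sharper estimate, for instance via a strengthened Cauchy--Schwarz matrix, is possible but not needed here. Since only the crude factor $N^2$ is claimed, the argument above is the intended one, and there is no real obstacle beyond verifying the hypotheses of the identity.
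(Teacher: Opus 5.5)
Your proposal is correct and follows essentially the paper's argument. Because the decomposition is unique, your $c_0$ equals $\sup_{q\ne0}\|L^*q\|_{D^{-1}}^2/\|q\|_S^2$ (since $\|P_i\sum_{j>i}I_jq_j\|_S^2=\|(L^*q)_i\|_{S_{ii}^{-1}}^2$), and the paper bounds this by the same contraction and triangle-inequality step, getting $\frac{N(N-1)}{2}\|q\|_D^2$ instead of your $N^2\|q\|_D^2$, before invoking \eqref{full_energy_lower}; the only slip is notational, since summing over $i$ gives $\sum_i\|P_i\sum_{j>i}I_jq_j\|_S^2\le N^2\|q\|_D^2$, and only after taking the supremum does this become $c_0\le N^2\sup_{\|q\|_S=1}\|q\|_D^2$.
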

\begin{proof}
The Xu--Zikatanov identity~\cite{XZ:2002} (see also~\cite[Theorem~4.1]{XZ:2017}) gives
\begin{equation}
\label{one_gs}
\|I-B_{\mathrm{GS}}S\|_S^2
=1-\left(1+\sup_{v\ne0}\frac{\|L^*v\|_{D^{-1}}^2}{\|v\|_S^2}\right)^{-1}.
\end{equation}
For the upper bound, \cref{Prop:dual} gives $\|S_{ij}v_j\|_{S_{ii}^{-1}}\le\|v_j\|_{S_{jj}}$. Therefore
\[
\|L^*v\|_{D^{-1}}^2
\le\sum_{i=1}^N\left(\sum_{j=i+1}^N\|v_j\|_{S_{jj}}\right)^2
\le\frac{N(N-1)}2\|v\|_D^2.
\]
By \cref{Lem:full_energy}, this gives
\[
\frac{\|L^*v\|_{D^{-1}}^2}{\|v\|_S^2}
\lesssim
\begin{cases}
\displaystyle N^2\left(1+\log\frac{\operatorname{diam}\Omega}{a_{\min}}\right),&d=2,\\
\displaystyle N^2,&d=3.
\end{cases}
\]
Substitution into~\eqref{one_gs} proves both estimates.
\end{proof}

\begin{remark}
\label{Rem:one_gs_lower}
For the arrays in \cref{Rem:array_scaling}, we have
\begin{equation*}
1-\|I-B_{\mathrm{GS}}S\|_S^2\lesssim
\begin{cases}
\displaystyle N^{-1}\left(1+\log\frac{\operatorname{diam}\Omega}{a}\right),&d=2,\\
\displaystyle \frac{\operatorname{diam}\Omega}{Na},&d=3.
\end{cases}
\end{equation*}
For fixed $\frac{a}{s}$, the contraction factor therefore approaches one as $N$ increases.
\end{remark}

\subsection{Block ordering and comparison of one-level methods}
\label{Subsec:ordering}
To examine how block ordering affects the convergence rate of \cref{Alg:alternating_reflections}, we use the symmetrization of the corresponding block Gauss--Seidel method~\cite{XZ:2017}, defined by
\begin{equation*}
\bar B_{\mathrm{GS}}
=B_{\mathrm{GS}}+B_{\mathrm{GS}}^*
-B_{\mathrm{GS}}^*SB_{\mathrm{GS}}
=(D+L)^{-*}D(D+L)^{-1}.
\end{equation*}
Since $L^*$ has a nontrivial kernel, $\sup\sigma(\bar B_{\mathrm{GS}}S)=1$, and~\eqref{one_gs} gives
\begin{equation}
\label{sgs_comparison_identity}
\kappa(\bar B_{\mathrm{GS}}S)
=1+\sup_{v\ne0}\frac{\|L^*v\|_{D^{-1}}^2}{\|v\|_S^2}
=\bigl(1-\|I-B_{\mathrm{GS}}S\|_S^2\bigr)^{-1}.
\end{equation}
Thus the condition number of the symmetrized preconditioner determines the energy contraction factor of the alternating method of reflections.

The convergence rate of the Gauss--Seidel method can depend strongly on the ordering of the blocks. The Gauss--Seidel and symmetric Gauss--Seidel methods can even converge more slowly than the relaxed Jacobi method for certain SPD systems with equal diagonal entries~\cite{SY:2021,XXS:2021}.

We next examine block ordering for the dual equation itself. Consider $N=m^2$ disks of radius $0.3$, with centers on a square grid of spacing one, inside a concentric disk of radius $0.75m$. These configurations satisfy \cref{Ass:neutral} with $\beta=\frac32$. We denote the discrete SPD matrix by $A_h$; implementation details are given in \cref{Sec:Computational}. Its block diagonal part is $D_h$, and $\bar B_{\mathrm{GS},h}$ is the symmetrized block Gauss--Seidel preconditioner. The lexicographic order traverses each row of the grid before moving to the next row. The checkerboard order visits all centers with even sum of the grid indices before those with odd sum, preserving lexicographic order within each group.


\begin{table}
\centering
\small
\begin{tabular}{r|r|rr}
 & Block Jacobi & \multicolumn{2}{c}{Symmetric block Gauss--Seidel} \\
$N$ & Any order & Lexicographic & Checkerboard \\ \hline
4 & 4.160 & 1.496 & 1.680 \\
16 & 16.406 & 3.213 & 7.389 \\
64 & 65.668 & 9.271 & 68.250 \\
256 & 262.229 & 30.289 & 820.278 \\
\end{tabular}
\caption{Spectral condition numbers $\kappa(D_h^{-1}A_h)$ and $\kappa(\bar B_{\mathrm{GS},h}A_h)$ for square arrays of disks. Only the block ordering changes between the two Gauss--Seidel columns.}
\label{Table:ordering}
\end{table}

\Cref{Table:ordering} shows that the checkerboard order gives much larger condition numbers than the lexicographic order as $N$ grows, and can give worse conditioning than the block Jacobi method. By~\eqref{sgs_comparison_identity}, this also means a weaker energy contraction for the corresponding alternating reflections. The dual operator couples all boundary components through the Green function. The sizes and ordered accumulation of these interactions, rather than the presence of nonzero blocks alone, determine the comparison.

Strategies for choosing favorable orders have been studied; see, e.g.,~\cite{LW:2019,OZ:2017,OZ:2023}. For the boundary blocks considered here, an ordering with a satisfactory convergence bound is not apparent. Since successive corrections are sensitive to ordering and a suitable choice is unclear, we use parallel local block corrections in the two-level constructions below.

\section{Two-level methods}
\label{Sec:TwoLevel}
As shown in \cref{Sec:OneLevel}, the convergence rates of \cref{Alg:alternating_reflections,Alg:parallel_reflections} can deteriorate as $N$ increases. In this section, we construct and analyze a two-level method of reflections to address this dependence (cf.~\cite{CGHS:2019}). We first introduce a coarse space for the dual equation~\eqref{dual_equation} and derive estimates for dual variables with zero boundary means under \cref{Ass:neutral}. We then combine parallel local corrections with a coarse correction before and after them. We describe the corresponding corrections of the primal variable and prove that the resulting SPD preconditioner has a condition number bounded independently of $N$, $\operatorname{diam}\Omega$, and the radii.

\subsection{Coarse space}
\label{Subsec:coarse}
We construct a coarse space from one constant trace on each boundary component. Define the coarse injection $\Pi_0\colon\mathbb R^N\to W^*$ by
\begin{equation*}
(\Pi_0\xi)_i=\xi_iS_{ii}^{-1}1_{\Gamma_i},
\quad 1\le i\le N.
\end{equation*}
Its dual adjoint satisfies
\[
(\Pi_0^*Dq)_i=\langle q_i,1_{\Gamma_i}\rangle,
\quad q\in W^*.
\]
Thus $\Pi_0^*Dq=0$ means that $q$ has zero mean on each boundary component.

We use $\Pi_0\mathbb R^N$ as the coarse space. Its $i$th basis function has only one nonzero component, $S_{ii}^{-1}1_{\Gamma_i}$, whose associated primal variable has constant trace on $\Gamma_i$. The coarse correction $B_0\colon W\to W^*$ is defined by
\begin{equation}
\label{coarse}
B_0=\Pi_0(\Pi_0^*S\Pi_0)^{-1}\Pi_0^*.
\end{equation}
Then $B_0S$ is the $S$-orthogonal projection onto $\Pi_0\mathbb R^N$.

\begin{remark}
\label{Rem:coarse_comparison}
We compare the proposed coarse correction~\eqref{coarse} with the constant-mode boundary correction in~\cite{CGHS:2019}. The latter uses the $L^2$ pairing on the boundaries $\Gamma_i$. Its restriction is defined by
\[
R\colon W\to\mathbb R^N,
\quad (Rw)_i=\int_{\Gamma_i}w_i\,ds.
\]
Transforming the boundary correction by $w=Dp$ gives
\begin{equation*}
B_{0,\mathrm{tr}}=\Pi_0(RS\Pi_0)^{-1}R,
\end{equation*}
whenever the coarse matrix is nonsingular. Our coarse correction~\eqref{coarse} preserves the symmetry of the dual formulation and gives an $S$-orthogonal projection, which facilitates the convergence analysis.
\end{remark}

The following estimates control the components with zero boundary means and remain valid after minimization over the coarse component. 
A similar argument based on the separation of constant and zero-mean boundary components can be found in~\cite{HS:2021}.

\begin{lemma}
\label{Lem:neutral_geometry}
Suppose that \cref{Ass:neutral} holds. Let $z\in W^*$ have zero mean on each boundary component, that is,
\begin{equation}
\label{neutral_condition}
\Pi_0^*Dz=0
\quad\Longleftrightarrow\quad
\langle z_i,1_{\Gamma_i}\rangle=0,\quad 1\le i\le N.
\end{equation}
Then we have
\begin{subequations}

\begin{align}
\|z\|_S^2&\lesssim\|z\|_D^2,
\label{neutral_upper}\\
\|z\|_D^2&\lesssim\min_{\xi\in\mathbb R^N}\|z+\Pi_0\xi\|_S^2.
\label{neutral_lower}
\end{align}
\end{subequations}
The implicit constants depend only on $\beta$.
\end{lemma}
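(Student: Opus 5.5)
The plan is to follow the localization argument in the proof of \cref{Lem:full_energy}, but to subtract a constant on each neighborhood $U_i$ before applying the cutoff. The zero-mean condition~\eqref{neutral_condition} lets us remove that constant at no cost, and a Poincar\'e inequality on the ball $U_i$ then replaces the Sobolev bound of \cref{Lem:local_L2}. This is how the factors $N$, $N^{\frac23}$ and $\log\frac{\operatorname{diam}\Omega}{a_{\min}}$ should disappear. Throughout, $\chi_i$ are the cutoffs used in the proof of \cref{Lem:full_energy}, and $u_i=A^{-1}\gamma_i^*z_i$, so that $\|u_i\|_A=\|z_i\|_{S_{ii}}$.

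\emph{Upper bound~\eqref{neutral_upper}.} We use $\|z\|_S=\sup_{v\ne0}\langle z,\gamma v\rangle/\|v\|_A$. Fix $v\in V$ and let $c_i$ be the mean of $v$ over $U_i$. By~\eqref{neutral_condition},
\[
\langle z_i,\gamma_iv\rangle=\langle z_i,\gamma_i v-c_i\rangle=\langle z_i,\gamma_i(\chi_i(v-c_i))\rangle=a(u_i,\chi_i(v-c_i)).
\]
The function $\chi_i(v-c_i)$ lies in $H_0^1(U_i)\subset V$, which justifies the last equality. Arguing as in~\eqref{cutoff_localization} and using the scaled Poincar\'e inequality $\|v-c_i\|_{L^2(U_i)}\lesssim a_i|v|_{H^1(U_i)}$, we get $|\chi_i(v-c_i)|_{H^1(U_i)}\lesssim|v|_{H^1(U_i)}$. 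Cauchy--Schwarz, together with the disjointness of the $U_i$, then gives $|\langle z,\gamma v\rangle|\lesssim\|z\|_D|v|_{H^1(\Omega)}$, which is~\eqref{neutral_upper}.

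\emph{Lower bound~\eqref{neutral_lower}.} Fix $\xi$, set $q=z+\Pi_0\xi$, and let $u(q)$ be its primal variable. The aim is to build test functions $w_i\in H_0^1(U_i)$ with $\gamma_iw_i=S_{ii}z_i$ and $|w_i|_{H^1(U_i)}\lesssim\|z_i\|_{S_{ii}}$. Given such $w_i$, we have
\[
\langle(\Pi_0\xi)_i,S_{ii}z_i\rangle=\xi_i\langle z_i,1_{\Gamma_i}\rangle=0,
\]
so the coarse part drops out. Hence
\[
\|z\|_D^2=\sum_i\langle q_i,\gamma_iw_i\rangle=a\Bigl(u(q),\sum_iw_i\Bigr)\lesssim\|q\|_S\,\|z\|_D,
\]
uniformly in $\xi$, which gives~\eqref{neutral_lower}.

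For the construction, take $w_i=\chi_i(u_i-c_i)+c_i\psi_i$. Here $c_i$ is the mean of $u_i$ on $U_i$, and $\psi_i\in H_0^1(U_i)$ equals one on $B(x_i,a_i)$ with $|\psi_i|_{H^1}^2\eqsim a_i^{d-2}$. In $d=2$ we take a logarithmic profile between radii $a_i$ and $\beta a_i$, so that $|\psi_i|_{H^1}^2\lesssim(\log\beta)^{-1}$. The trace is $\gamma_i w_i=S_{ii}z_i$, and Poincar\'e bounds the first term by $|u_i|_{H^1(U_i)}\le\|z_i\|_{S_{ii}}$.

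\emph{Main obstacle.} The remaining step, which is the main difficulty, is the bound
\[
|c_i|\,a_i^{\frac{d-2}{2}}\lesssim\|z_i\|_{S_{ii}}.
\]
Without the zero-mean condition this fails: it is exactly where the logarithm enters in \cref{Lem:full_energy}. We plan to argue by duality. With $\rho_i=|U_i|^{-1}1_{U_i}$ and $\phi_i=A^{-1}\rho_i$, we have $c_i=\langle\rho_i,u_i\rangle=\langle z_i,\gamma_i\phi_i\rangle$. By~\eqref{neutral_condition} this equals $\langle z_i,\gamma_i(\chi_i(\phi_i-m_i))\rangle$, where $m_i$ is the mean of $\phi_i$ on $U_i$. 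It is then bounded by $\|z_i\|_{S_{ii}}\,|\phi_i|_{H^1(U_i)}$ via the same Poincar\'e step. So it suffices to show the \emph{local} gradient bound $|\phi_i|_{H^1(U_i)}\lesssim a_i^{\frac{2-d}{2}}$. To prove it, we split $\phi_i$ into the free-space Newtonian potential of $\rho_i$ and a harmonic correction enforcing the Dirichlet condition on $\partial\Omega$. The gradient of the Newtonian part on $U_i$ is computed explicitly by scaling and has the required size. The correction is harmonic in $\Omega\supset U_i$, and its gradient on $U_i$ should be controlled by interior estimates together with the positive distance between $U_i$ and $\partial\Omega$. Verifying that this last control is uniform in $\operatorname{diam}\Omega$, and in particular free of a logarithm when $d=2$, is the delicate point.
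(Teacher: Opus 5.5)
Your upper bound \eqref{neutral_upper} is the paper's argument. Your reduction of \eqref{neutral_lower} to the estimate $|c_i|\,a_i^{\frac{d-2}{2}}\lesssim\|z_i\|_{S_{ii}}$ is also sound, and the coarse part does drop out as you say. The gap is that this estimate carries the whole lemma and is left open, and the route you propose for it does not close.

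Your duality step gives only $|c_i|\lesssim\|z_i\|_{S_{ii}}\,|\phi_i|_{H^1(U_i)}$, so you need $|\phi_i|_{H^1(U_i)}\lesssim a_i^{\frac{2-d}{2}}$ uniformly. For $d=3$ this follows from the global energy, since $\|\phi_i\|_A^2=\langle\rho_i,\phi_i\rangle\lesssim\|\rho_i\|_{L^{6/5}}^2\eqsim a_i^{-1}$. For $d=2$ the global energy is in general of order $\log(\operatorname{diam}\Omega/a_i)$. Interior estimates for your harmonic correction need a bound on its oscillation, and the maximum principle gives one only up to that same logarithm. Also, \cref{Ass:neutral} requires only $U_i\subset\Omega$, so there is no positive distance from $U_i$ to $\partial\Omega$ relative to $a_i$.

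The bound you want can in fact fail. Take $\Omega$ a large disk with a small interior hole of radius comparable to $a_i^2/\operatorname{diam}\Omega$ touching $\partial U_i$. The capacitary correction forced by that hole gives $|\phi_i|_{H^1(U_i)}^2\gtrsim\log(\operatorname{diam}\Omega/a_i)$. Moving to a smaller ball with a margin might rescue the route. It would then need a uniform gradient bound for the regular part of the Dirichlet Green function of an arbitrary, possibly multiply connected, Lipschitz domain, which is not a standard interior estimate.

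The missing idea is to bound the mean of $u_i$ directly, using the zero-mean condition locally, instead of passing through $A^{-1}\rho_i$. The paper proceeds as follows.
\begin{enumerate}
\item The function $u_i$ is harmonic in $U_i\setminus\overline{B(x_i,a_i)}$ with zero flux through every sphere $\partial B(x_i,r)$, $a_i<r<\beta a_i$. Hence its spherical means are constant and equal its average over that annulus.
\item Let $\tilde U_i=B(x_i,\frac{1+\beta}{2}a_i)$, and let $e_i$ equal one on $\tilde U_i$, be harmonic outside, and vanish on $\partial\Omega$. The Green identity gives $\int_{\partial\tilde U_i}u_i\,\partial_ne_i\,ds=-a(u_i,e_i)=-\langle z_i,1_{\Gamma_i}\rangle=0$, while the Hopf lemma gives $\partial_ne_i<0$.
\item Therefore $u_i$ changes sign on $\partial\tilde U_i$, and its mean there is bounded by its oscillation.
\item The mean-value property on balls of radius $\frac{\beta-1}{4}a_i$ and Poincar\'e on the annulus then give $\|u_i\|_{L^2(U_i\setminus\overline{B(x_i,a_i)})}\lesssim a_i\|\nabla u_i\|_{L^2(U_i\setminus\overline{B(x_i,a_i)})}$.
\end{enumerate}
The global shape of $\Omega$ enters only through the sign of $\partial_ne_i$, which is why no logarithm appears. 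This gives your bound on $c_i$, which differs from the annulus average by a Poincar\'e-controlled amount. It also lets you take $w_i=\chi_iu_i$ directly, without the auxiliary $\psi_i$.
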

\begin{proof}
Use the cutoffs $\chi_i$ from the proof of \cref{Lem:full_energy}. Throughout this proof, the implicit constants depend only on $\beta$. For $v\in V$, we denote its average over $U_i$ by
\[
\bar v^{\,U_i}=\frac{1}{|U_i|}\int_{U_i}v\,dx.
\]
Applying~\eqref{cutoff_localization} to $v-\bar v^{\,U_i}$, we obtain
\begin{equation}
\label{mean_localization}
|\chi_i(v-\bar v^{\,U_i})|_{H^1(U_i)}^2
\lesssim |v|_{H^1(U_i)}^2+a_i^{-2}\|v-\bar v^{\,U_i}\|_{L^2(U_i)}^2
\lesssim |v|_{H^1(U_i)}^2,
\end{equation}
where the last inequality follows from the Poincar\'e inequality with a scaling argument.

To prove~\eqref{neutral_upper}, we use~\eqref{neutral_condition} to obtain
\[
\langle z_i,\gamma_i v\rangle
=\langle z_i,\gamma_i\bigl(\chi_i(v-\bar v^{\,U_i})\bigr)\rangle.
\]
Proceeding as in~\eqref{localized_duality} and using~\eqref{mean_localization} together with the disjointness of the $U_i$, we obtain
\[
|\langle z,\gamma v\rangle|\le\|z\|_D\left(\sum_{i=1}^N|\chi_i(v-\bar v^{\,U_i})|_{H^1(U_i)}^2\right)^{\frac{1}{2}}\lesssim\|z\|_D|v|_{H^1(\Omega)}.
\]
Taking the supremum over $\|v\|_A=1$ proves~\eqref{neutral_upper}.

To prove~\eqref{neutral_lower}, let $u_i=A^{-1}\gamma_i^*z_i$ and define the intermediate ball by
\[
\tilde U_i=B\left(x_i,\frac{1+\beta}{2}a_i\right).
\]

The function $u_i$ is harmonic in $U_i\setminus\overline{B(x_i,a_i)}$. The Green identity and~\eqref{neutral_condition} give
\begin{equation*}
\begin{aligned}
\frac{d}{dr}\left(\frac{1}{|\partial B(x_i,r)|}
\int_{\partial B(x_i,r)}u_i\,ds\right)
&=\frac{1}{|\partial B(x_i,r)|}
\int_{\partial B(x_i,r)}\partial_nu_i\,ds\\
&=-\frac{\langle z_i,1_{\Gamma_i}\rangle}{|\partial B(x_i,r)|}=0,
\quad a_i<r<\beta a_i,
\end{aligned}
\end{equation*}
where the normal points out of the ball. Thus its spherical mean is constant, and integration in polar coordinates gives
\begin{equation}
\label{neutral_mean}
\bar u_i^{\,\partial\tilde U_i}
=\frac{1}{|\partial\tilde U_i|}\int_{\partial\tilde U_i}u_i\,ds
=\frac{1}{|U_i\setminus\overline{B(x_i,a_i)}|}
\int_{U_i\setminus\overline{B(x_i,a_i)}}u_i\,dx.
\end{equation}

Let $e_i$ equal one in $\tilde U_i$ and be harmonic in $\Omega\setminus\overline{\tilde U}_i$, with zero trace on $\partial\Omega$. The maximum principle and the Hopf boundary lemma~\cite{GT:2001} give $\partial_ne_i<0$ on $\partial\tilde U_i$, where the normal points out of $\tilde U_i$. The Green identity~\cite{McLean:2000} and~\eqref{neutral_condition} give
\begin{equation}
\label{neutral_weight}
\int_{\partial\tilde U_i}u_i\partial_ne_i\,ds
=-a(u_i,e_i)=-\langle z_i,1_{\Gamma_i}\rangle=0.
\end{equation}
Since $\partial_ne_i<0$, \eqref{neutral_weight} implies
\[
\min_{\partial\tilde U_i}u_i\le0\le\max_{\partial\tilde U_i}u_i.
\]
Consequently, we have
\begin{equation}
\label{neutral_average_bound}
|\bar u_i^{\,\partial\tilde U_i}|\le\max\left\{\bar u_i^{\,\partial\tilde U_i}-\min_{\partial\tilde U_i}u_i,\,\max_{\partial\tilde U_i}u_i-\bar u_i^{\,\partial\tilde U_i}\right\}=\|u_i-\bar u_i^{\,\partial\tilde U_i}\|_{L^\infty(\partial\tilde U_i)}.
\end{equation}
A similar argument was used in~\cite[Theorem~2]{Tai:2003}.

For every $x\in\partial\tilde U_i$, the ball $B(x,\frac{\beta-1}{4}a_i)$ is contained in $U_i\setminus\overline{B(x_i,a_i)}$. The mean-value property of harmonic functions~\cite{Evans:2010} and the Cauchy--Schwarz inequality give
\begin{equation}
\label{neutral_pointwise}
\begin{aligned}
|u_i(x)-\bar u_i^{\,\partial\tilde U_i}|
&=\left|\frac{1}{|B(x,\frac{\beta-1}{4}a_i)|}
\int_{B(x,\frac{\beta-1}{4}a_i)}(u_i-\bar u_i^{\,\partial\tilde U_i})\,dy\right|\\
&\le |B(x,\frac{\beta-1}{4}a_i)|^{-\frac{1}{2}}
\|u_i-\bar u_i^{\,\partial\tilde U_i}\|_{L^2(B(x,\frac{\beta-1}{4}a_i))}\\
&\lesssim a_i^{-\frac{d}{2}}
\|u_i-\bar u_i^{\,\partial\tilde U_i}\|_{L^2(U_i\setminus\overline{B(x_i,a_i)})}\\
&\lesssim a_i^{1-\frac{d}{2}}
\|\nabla u_i\|_{L^2(U_i\setminus\overline{B(x_i,a_i)})},
\end{aligned}
\end{equation}
where the last inequality follows from~\eqref{neutral_mean} and the Poincar\'e inequality with a scaling argument. The triangle inequality, \eqref{neutral_average_bound}, and~\eqref{neutral_pointwise} yield
\[
\begin{aligned}
\|u_i\|_{L^2(U_i\setminus\overline{B(x_i,a_i)})}
&\le\|u_i-\bar u_i^{\,\partial\tilde U_i}\|_{L^2(U_i\setminus\overline{B(x_i,a_i)})}+|U_i\setminus\overline{B(x_i,a_i)}|^{\frac{1}{2}}|\bar u_i^{\,\partial\tilde U_i}|\\
&\lesssim a_i\|\nabla u_i\|_{L^2(U_i\setminus\overline{B(x_i,a_i)})}.
\end{aligned}
\]
Applying~\eqref{cutoff_localization} and using the support of $\nabla\chi_i$ in $U_i\setminus\overline{B(x_i,a_i)}$, we obtain
\begin{equation*}
\begin{aligned}
|\chi_i u_i|_{H^1(U_i)}^2
&\lesssim|u_i|_{H^1(U_i)}^2
+a_i^{-2}\|u_i\|_{L^2(U_i\setminus\overline{B(x_i,a_i)})}^2\\
&\lesssim|u_i|_{H^1(\Omega)}^2=\|I_i z_i\|_S^2.
\end{aligned}
\end{equation*}

The trace of $\chi_i u_i$ on $\Gamma_i$ is $S_{ii}z_i$. By~\eqref{neutral_condition}, we have
\[
\langle S_{ii}^{-1}1_{\Gamma_i},\gamma_i(\chi_i u_i)\rangle
=\langle z_i,1_{\Gamma_i}\rangle=0.
\]
Using this orthogonality and proceeding as in~\eqref{localized_energy}, for every $\xi\in\mathbb R^N$ we have
\[
\|z\|_D^2
=\sum_{i=1}^N a(u(z+\Pi_0\xi),\chi_i u_i)
\le\|z+\Pi_0\xi\|_S\left(\sum_{i=1}^N\|\chi_i u_i\|_A^2\right)^{\frac{1}{2}}
\lesssim\|z+\Pi_0\xi\|_S\|z\|_D,
\]
which proves~\eqref{neutral_lower} and completes the proof.
\end{proof}

The following stable decomposition is a direct consequence of \cref{Lem:neutral_geometry}.

\begin{lemma}
\label{Lem:neutral_stable}
Suppose that \cref{Ass:neutral} holds. Every $v\in W^*$ has the decomposition
\begin{equation*}
v=\Pi_0\xi_{D}+\sum_{i=1}^NI_iz_i,
\end{equation*}
where $\xi_D=(\Pi_0^*D\Pi_0)^{-1}\Pi_0^*Dv$ and $z=v-\Pi_0\xi_D$. Then $z$ satisfies~\eqref{neutral_condition}, and we have
\begin{equation*}
\|\Pi_0\xi_{D}\|_S^2+\sum_{i=1}^N\|I_iz_i\|_S^2
\lesssim\|v\|_S^2.
\end{equation*}
The implicit constant depends only on $\beta$.
\end{lemma}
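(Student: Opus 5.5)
We first check that $z$ has zero boundary means. Since $\Pi_0^*D\Pi_0$ is the diagonal matrix with entries $\langle S_{ii}^{-1}1_{\Gamma_i},1_{\Gamma_i}\rangle>0$, it is invertible. By construction $\Pi_0^*Dz=\Pi_0^*Dv-\Pi_0^*D\Pi_0\xi_D=0$, so $z$ satisfies~\eqref{neutral_condition}. Equivalently, $\Pi_0\xi_D$ is the $D$-orthogonal projection of $v$ onto $\Pi_0\mathbb R^N$, and $z$ is $D$-orthogonal to the coarse space. The decomposition $v=\Pi_0\xi_D+\sum_i I_iz_i$ is then trivial, since $z=\sum_i I_iz_i$.

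Next we bound the local terms. We have $\sum_{i=1}^N\|I_iz_i\|_S^2=\|z\|_D^2$. Apply~\eqref{neutral_lower} to $z$ with $\xi=\xi_D$, which is allowed because the minimum there is over all $\xi$. This gives
\[
\|z\|_D^2\lesssim\min_{\xi\in\mathbb R^N}\|z+\Pi_0\xi\|_S^2\le\|z+\Pi_0\xi_D\|_S^2=\|v\|_S^2 .
\]
This step is the substantive content, and it is already contained in \cref{Lem:neutral_geometry}.

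For the coarse term, write $\Pi_0\xi_D=v-z$ and use the triangle inequality:
\[
\|\Pi_0\xi_D\|_S\le\|v\|_S+\|z\|_S .
\]
By~\eqref{neutral_upper}, $\|z\|_S^2\lesssim\|z\|_D^2$, and the previous step gives $\|z\|_D^2\lesssim\|v\|_S^2$. Hence $\|\Pi_0\xi_D\|_S^2\lesssim\|v\|_S^2$.

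Adding the coarse and local bounds proves the claim, with a constant depending only on $\beta$. The only potential obstacle is avoiding any comparison between the $D$- and $S$-norms on the coarse space itself, since such a comparison would bring in $N$ or logarithmic factors through \cref{Lem:full_energy}. The triangle-inequality route above sidesteps this by only ever comparing norms on the zero-mean component $z$.
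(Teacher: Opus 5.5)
Your proof is correct and matches the argument the paper intends when it calls this lemma a direct consequence of \cref{Lem:neutral_geometry}. The zero-mean property follows from the definition of $\xi_D$, the local terms $\sum_i\|I_iz_i\|_S^2=\|z\|_D^2$ are bounded by \eqref{neutral_lower} with $\xi=\xi_D$, and the coarse term is bounded by the triangle inequality together with \eqref{neutral_upper}.
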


\subsection{Hybrid two-level method}
\label{Subsec:hybrid_twolevel}
We apply the coarse correction before and after the parallel local corrections, giving the hybrid preconditioner
\begin{equation}
\label{hybrid}
B_{\mathrm{hyb}}=B_0+(I-B_0S)D^{-1}(I-SB_0).
\end{equation}
The corresponding preconditioned operator has the hybrid Schwarz form in~\cite[equation~(2.11)]{TW:2005}. We first describe its implementation in the primal variable, using the recovery map~\eqref{linear_recovery}.

The primal coarse space is spanned by $\mathcal H_i1_{\Gamma_i}$, since
\[
u(\Pi_0\xi)=\sum_{i=1}^N\xi_i\mathcal H_i1_{\Gamma_i}.
\]
For boundary data $r\in W$, define the coarse extension $\mathcal H_0\colon W\to H$ by
\begin{equation}
\label{primal_coarse}
\mathcal H_0r=\sum_{i=1}^N\xi_i\mathcal H_i1_{\Gamma_i},
\quad\text{where}\quad (\Pi_0^*S\Pi_0)\xi=\Pi_0^*r.
\end{equation}
Thus $\mathcal H_0r=u(B_0r)$, and $\mathcal H_0\gamma$ is the $a$-orthogonal projection onto the primal coarse space. The functions $\mathcal H_i1_{\Gamma_i}$ and the coarse matrix are computed once.

For a current primal iterate $v=u(q)$, its boundary discrepancy is $r=g-\gamma v=g-Sq$. The two-level reflection correction is given in \cref{Alg:hybrid_reflections}. The first coarse correction is followed by parallel harmonic corrections, and the final coarse correction removes their coarse component.

\begin{algorithm}
\caption{Two-level reflection correction for boundary discrepancy $r\in W$}
\label{Alg:hybrid_reflections}
\begin{algorithmic}
\STATE Compute the coarse correction $c_0=\mathcal H_0r$ (see~\eqref{primal_coarse}).
\STATE Compute the local corrections in parallel using $\mathcal H_i$ (see~\eqref{harmonic_extension}):
\vspace{-0.15cm}
\[
d_i=\mathcal H_i(r_i-\gamma_i c_0),\quad 1\le i\le N.
\]
\vspace{-0.35cm}
\STATE Apply the final coarse correction and return
\vspace{-0.15cm}
\[
d=c_0+\sum_{i=1}^N d_i-\mathcal H_0\gamma\left(\sum_{i=1}^N d_i\right).
\]
\vspace{-0.35cm}
\end{algorithmic}
\end{algorithm}

Indeed, $\mathcal H_i=A^{-1}\gamma_i^*S_{ii}^{-1}$ gives
\[
\sum_{i=1}^N d_i=u\bigl(D^{-1}(I-SB_0)r\bigr),
\quad d=u\bigl(B_0r+(I-B_0S)D^{-1}(I-SB_0)r\bigr)=u(B_{\mathrm{hyb}}r).
\]
Hence \cref{Alg:hybrid_reflections} is exactly the primal realization of the dual preconditioning step. 

In \cref{Thm:hybrid}, we show that the estimates in \cref{Lem:neutral_geometry} give a condition number bound independent of $N$.

\begin{theorem}
\label{Thm:hybrid}
Suppose that \cref{Ass:neutral} holds. The preconditioner $B_{\mathrm{hyb}}$ defined in~\eqref{hybrid} is SPD and satisfies
\begin{equation*}
1\lesssim\inf\sigma(B_{\mathrm{hyb}}S)\le1
\le\sup\sigma(B_{\mathrm{hyb}}S)\lesssim1.
\end{equation*}
Consequently, we have
\begin{equation*}
\kappa(B_{\mathrm{hyb}}S)\lesssim1.
\end{equation*}
The implicit constants depend only on $\beta$.
\end{theorem}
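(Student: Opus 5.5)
We view the hybrid preconditioner as an additive method on the $S$-orthogonal complement of the coarse space. Let $P_0=B_0S$ be the $S$-orthogonal projection onto $\Pi_0\mathbb R^N$. Then
\[
B_{\mathrm{hyb}}S=P_0+(I-P_0)D^{-1}S(I-P_0).
\]

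\emph{Symmetry and positive definiteness.} $B_0$ is symmetric, and $(I-B_0S)=(I-SB_0)^*$, so $B_{\mathrm{hyb}}$ is symmetric. For $r\in W$ we have
\[
\langle r,B_{\mathrm{hyb}}r\rangle=\langle r,B_0r\rangle+\|(I-SB_0)r\|_{D^{-1}}^2\ge0 .
\]
Coercivity of $B_{\mathrm{hyb}}$ then follows once we have the spectral lower bound below, since $S$ is an isomorphism. Next, $B_{\mathrm{hyb}}S$ is $S$-self-adjoint. It acts as the identity on $\operatorname{range}P_0$ and leaves $\operatorname{range}(I-P_0)$ invariant, where it equals $T=(I-P_0)D^{-1}S$. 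Hence
\[
\sigma(B_{\mathrm{hyb}}S)=\{1\}\cup\sigma(T|_{\operatorname{range}(I-P_0)}).
\]
This gives $\inf\sigma\le1\le\sup\sigma$ whenever $N\ge1$. It remains to bound the Rayleigh quotient $(Tw,w)_S/\|w\|_S^2=\langle Sw,D^{-1}Sw\rangle/\|w\|_S^2$ for $w\in\operatorname{range}(I-P_0)$, i.e.\ $S$-orthogonal to the coarse space.

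\emph{Upper bound.} By the standard additive Schwarz identity, $\langle Sw,D^{-1}Sw\rangle=\sup_{q}\langle Sw,q\rangle^2/\|q\|_D^2$. Since $w\perp_S\Pi_0\mathbb R^N$, we may replace $q$ by $z=q-\Pi_0\xi_D(q)$, the zero-mean part from \cref{Lem:neutral_stable}. Then
\[
\langle Sw,q\rangle=(w,z)_S\le\|w\|_S\|z\|_S\lesssim\|w\|_S\|z\|_D
\]
by \eqref{neutral_upper}. It remains to show $\|z\|_D\le\|q\|_D$. This holds because $z=q-\Pi_0\xi_D$ is the $D$-orthogonal projection of $q$ onto the complement of $\Pi_0\mathbb R^N$, as the definition of $\xi_D$ shows. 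Thus $\sup\sigma\lesssim1$.

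\emph{Lower bound.} This is the main step. For $w\perp_S$ coarse, write $w=\Pi_0\xi_D+z$ as in \cref{Lem:neutral_stable}, with $z$ of zero mean. Then $\|w\|_S^2=(w,z)_S=\langle Sw,z\rangle\le\|Sw\|_{D^{-1}}\|z\|_D$. By \eqref{neutral_lower}, taking $\xi=\xi_D$, we get $\|z\|_D\lesssim\|z+\Pi_0\xi_D\|_S=\|w\|_S$. Hence $\|w\|_S^2\lesssim\langle Sw,D^{-1}Sw\rangle$, i.e.\ $\inf\sigma(T|_{\operatorname{range}(I-P_0)})\gtrsim1$.

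Combining both bounds gives the two-sided spectral estimate and $\kappa\lesssim1$, with constants depending only on $\beta$ through \cref{Lem:neutral_geometry}. Coercivity of $B_{\mathrm{hyb}}$ follows as well: $\langle r,B_{\mathrm{hyb}}r\rangle=(B_{\mathrm{hyb}}S\,q,q)_S$ with $q=S^{-1}r$, and this is bounded below by $\inf\sigma\,\|q\|_S^2\eqsim\|r\|_{S^{-1}}^2$.
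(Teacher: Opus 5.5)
Your proof is correct and follows the paper's argument. You establish $\|Sw\|_{D^{-1}}\eqsim\|w\|_S$ on the $S$-orthogonal complement of $\Pi_0\mathbb R^N$, using \eqref{neutral_upper} together with the $D$-orthogonal zero-mean projection for one direction and \eqref{neutral_lower} with $\xi=\xi_D$ for the other, and then combine this with the identity action on the coarse space. The only difference is cosmetic: you split $\sigma(B_{\mathrm{hyb}}S)$ over invariant subspaces, whereas the paper writes the quadratic form $\langle B_{\mathrm{hyb}}Sv,Sv\rangle=\|v_0\|_S^2+\|Sv_\perp\|_{D^{-1}}^2$ directly.
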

\begin{proof}
Let $0\ne v\in\ker(\Pi_0^*S)$. The $D$-orthogonal decomposition in \cref{Lem:neutral_stable} gives
\[
\|Sv\|_{D^{-1}}=\sup_{0\ne z\in W^*,\,\Pi_0^*Dz=0}\frac{|\langle z,Sv\rangle|}{\|z\|_D}\lesssim\|v\|_S,
\]
where the inequality follows from \cref{Lem:neutral_geometry} for $z$ satisfying~\eqref{neutral_condition}. For the reverse bound, choose $z=v-\Pi_0\xi_D$ as in \cref{Lem:neutral_stable}. Orthogonality and the stable decomposition give
\[
\|v\|_S^2=\langle z,Sv\rangle
\le\|z\|_D\|Sv\|_{D^{-1}}
\lesssim\|v\|_S\|Sv\|_{D^{-1}}.
\]
Hence $\|Sv\|_{D^{-1}}\eqsim\|v\|_S$ on $\ker(\Pi_0^*S)$, with constants depending only on $\beta$.

For arbitrary $v\in W^*$, consider the $S$-orthogonal decomposition
\[
v=v_0+v_\perp,
\quad\text{where}\quad v_0=B_0Sv\in\Pi_0\mathbb R^N,
\quad v_\perp\in\ker(\Pi_0^*S).
\]
Using~\eqref{hybrid} and the preceding estimates, we obtain
\[
\langle B_{\mathrm{hyb}}Sv,Sv\rangle
=\|v_0\|_S^2+\|Sv_\perp\|_{D^{-1}}^2
\eqsim\|v_0\|_S^2+\|v_\perp\|_S^2
=\|v\|_S^2.
\]
Together with the symmetry of $B_{\mathrm{hyb}}$, this proves coercivity and the uniform spectral bounds. Finally, $B_{\mathrm{hyb}}S\Pi_0=\Pi_0$ gives $1\in\sigma(B_{\mathrm{hyb}}S)$. This completes the proof.
\end{proof}

\begin{remark}
\label{Rem:parallel_twolevel}
One may instead apply the local and coarse corrections in parallel, giving
\begin{equation}
\label{additive}
B_{\mathrm{par}}=D^{-1}+B_0.
\end{equation}
Under \cref{Ass:neutral}, the stable decomposition in \cref{Lem:neutral_stable} and the variational characterization of additive Schwarz preconditioners~\cite{Xu:1992} give
\begin{equation*}
\begin{aligned}
1\lesssim\inf\sigma(B_{\mathrm{par}}S)&\le1,\\
\sup\sigma(D^{-1}S)\le\sup\sigma(B_{\mathrm{par}}S)&\le1+\sup\sigma(D^{-1}S).
\end{aligned}
\end{equation*}
Consequently, $\kappa(B_{\mathrm{par}}S)\eqsim\sup\sigma(D^{-1}S)$, with constants depending only on $\beta$. Thus the lower bounds in \cref{Rem:array_scaling} remain valid, and the fully parallel correction retains dependence on $N$ for these arrays.
\end{remark}

\section{Computational aspects}
\label{Sec:Computational}
In this section, we describe the discretization and implementation of the preconditioners. We first explain how conforming Galerkin discretization preserves the estimates used for the hybrid method. We then compare the one-level and two-level preconditioners numerically.

\subsection{Galerkin discretization}
\label{Subsec:discretization}
For each $i$, choose a finite-dimensional space $W_{i,h}^*\subset H^{-\frac{1}{2}}(\Gamma_i)$ on which the boundary integral functional is nonzero. We set $W_h^*=\prod_{i=1}^NW_{i,h}^*\subset W^*$. In the numerical experiments below, we use Fourier spaces on circular boundaries~\cite{Kress:2014}. Identifying elements of $W_h^*$ with their coefficient vectors, we define the Galerkin matrix and right-hand side by
\[
w_h^tA_hv_h=\langle w_h,Sv_h\rangle,
\quad w_h^tf_h=\langle w_h,g\rangle,
\quad v_h,w_h\in W_h^*.
\]
By replacing $W^*$ in the dual minimization problem~\eqref{dual_energy} with $W_h^*$, we obtain a discrete minimization problem whose optimality condition is the following SPD linear system:
\begin{equation*}
A_hp_h=f_h.
\end{equation*}
Let $D_h=\operatorname{diag}(A_{11,h},\ldots,A_{NN,h})$. We define the coarse injection $\Pi_{0,h}$ and correction $B_{0,h}$ by
\begin{equation}
\label{discrete_coarse}
(\Pi_{0,h}^tD_hv_h)_i=\langle v_{i,h},1_{\Gamma_i}\rangle,
\quad
B_{0,h}=\Pi_{0,h}(\Pi_{0,h}^tA_h\Pi_{0,h})^{-1}\Pi_{0,h}^t.
\end{equation}
The assumption on the boundary integral functionals makes $\Pi_{0,h}$ injective and the coarse matrix SPD. Its columns are obtained by independent local solves.

We next verify that the estimates used to analyze the hybrid preconditioner are preserved by this discretization. Suppose that \cref{Ass:neutral} holds. The estimates in \cref{Lem:neutral_geometry} pass to these matrices: for every $z_h\in W_h^*$ with $\Pi_{0,h}^tD_hz_h=0$, we have
\begin{subequations}

\begin{align}
\|z_h\|_{A_h}^2&\lesssim\|z_h\|_{D_h}^2,
\label{discrete_neutral_upper}\\
\|z_h\|_{D_h}^2&\lesssim\min_{\xi\in\mathbb R^N}\|z_h+\Pi_{0,h}\xi\|_{A_h}^2.
\label{discrete_neutral_lower}
\end{align}
\end{subequations}
Indeed, the Galerkin norms agree with the continuous norms on $W_h^*$, so~\eqref{discrete_neutral_upper} is immediate. To prove~\eqref{discrete_neutral_lower}, decompose $v_h=z_h+\Pi_{0,h}\xi=z+\Pi_0\eta$ with $\Pi_0^*Dz=0$ using \cref{Lem:neutral_stable}. The conditions $\Pi_{0,h}^tD_hz_h=0$ and~\eqref{neutral_condition} give
\[
\|z_h\|_D^2=(v_h,z_h)_D=(z,z_h)_D
\le\|z\|_D\|z_h\|_D.
\]
Thus \cref{Lem:neutral_geometry} yields $\|v_h\|_{A_h}^2\gtrsim\|z\|_D^2\ge\|z_h\|_{D_h}^2$, proving~\eqref{discrete_neutral_lower}.

Consequently, the argument in \cref{Thm:hybrid} applies to the matrix preconditioner
\begin{equation*}
B_{\mathrm{hyb},h}=B_{0,h}+(I-B_{0,h}A_h)D_h^{-1}(I-A_hB_{0,h})
\end{equation*}
and gives the condition number bound
\begin{equation*}
\kappa(B_{\mathrm{hyb},h}A_h)\lesssim1.
\end{equation*}

\subsection{Numerical results}
\label{Subsec:numerics}
We present numerical results supporting our theoretical findings. We compare the preconditioners introduced above through their condition numbers and their effect on the convergence of the conjugate gradient method. 

We use the square arrays considered in \cref{Sec:OneLevel}: $N=m^2$ disks of radius $0.3$, grid spacing one, and an ambient disk of radius $R=0.75m$. Identifying $\mathbb R^2$ with $\mathbb C$, we write the Dirichlet Green function of the ambient disk as follows~\cite{Evans:2010}:
\[
G_R(z,w)=\frac1{2\pi}\log\frac{|R^2-z\overline w|}{R|z-w|}.
\]
On each circle, parameterized by $\theta\in[0,2\pi)$, we use the Galerkin space
\[
W_{i,h}^*=\operatorname{span}\{1,\cos(k\theta),\sin(k\theta):1\le k\le8\}.
\]
Each local space has dimension 17. We assemble the Galerkin matrix in an $L^2(\Gamma_i)$-orthonormal Fourier basis using the Fourier coefficients of $G_R$.

We omit the subscript $h$ in this subsection and write the discrete system as $Ap=f$. The matrices $D$, $\Pi_0$, and $B_0$ are constructed as in~\eqref{discrete_coarse}.

\Cref{Table:conditions} compares the one-level block Jacobi preconditioner $D^{-1}$ from~\eqref{dual_jacobi}, the parallel two-level preconditioner $B_{\mathrm{par}}$ in~\eqref{additive}, and the hybrid two-level preconditioner $B_{\mathrm{hyb}}$ in~\eqref{hybrid}. The parallel two-level correction reduces the condition number relative to the one-level block Jacobi method, but retains growth with $N$. The hybrid condition number remains below two, consistently with \cref{Thm:hybrid}.

\begin{table}
\centering
\small
\begin{tabular}{r|rrr}
$N$ & \makecell{One-level\\Jacobi} & \makecell{Two-level\\parallel} & \makecell{Two-level\\hybrid} \\ \hline
4 & 4.160 & 4.070 & 1.572 \\
16 & 16.406 & 8.415 & 1.773 \\
64 & 65.668 & 22.722 & 1.863 \\
256 & 262.229 & 71.595 & 1.890 \\
\end{tabular}
\caption{Spectral condition numbers of $D^{-1}A$, $B_{\mathrm{par}}A$, and $B_{\mathrm{hyb}}A$ for square arrays of disks.}
\label{Table:conditions}
\end{table}

We finally compare the three preconditioners in the conjugate gradient method for $N=256$. We prescribe the discrete solution by its boundary components and set the right-hand side using its coefficient vector $p$:
\begin{equation}
\label{numerical_rhs}
p_i(\theta)=\sin(\sqrt2\,i)+\cos\theta
+\frac14\sin(2\theta+\sqrt3\,i),
\quad f=Ap,
\end{equation}
where the boundary components are indexed in lexicographic order. All methods start from $p^{(0)}=0$. We report the relative energy error $\frac{\|p-p^{(k)}\|_A}{\|p\|_A}$.

\begin{figure}
\centering
\subfloat[Spectral condition numbers.\label{Fig:conditions}]{%
\includegraphics[width=0.48\textwidth]{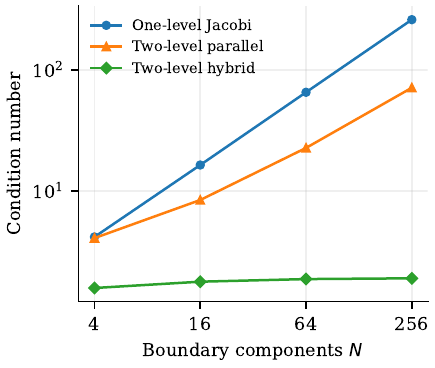}}
\quad
\subfloat[Conjugate gradient energy errors.\label{Fig:cg_energy}]{%
\includegraphics[width=0.48\textwidth]{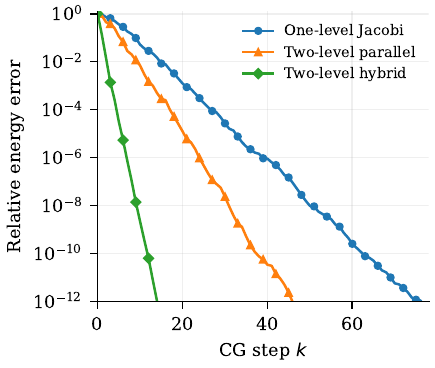}}
\caption{Comparison of $D^{-1}$, $B_{\mathrm{par}}$, and $B_{\mathrm{hyb}}$: (a) condition numbers from \cref{Table:conditions}; (b) relative energy errors for $N=256$, the right-hand side in~\eqref{numerical_rhs}, and zero initial guess.}
\label{Fig:numerical_results}
\end{figure}

\Cref{Fig:cg_energy} shows the effect of the two coarse corrections within the conjugate gradient method. The hybrid preconditioner gives the fastest decrease in the energy error for these data, consistently with the condition numbers in \cref{Table:conditions}.

\section{Conclusion}
\label{Sec:Conclusion}
In this paper, we have developed and analyzed a two-level method of reflections. The proposed method combines coarse and local reflection corrections in a symmetric preconditioner. Under a geometric separation condition, its condition number is bounded independently of the number and sizes of the holes and the domain diameter.

The coarse problem has one unknown per hole. Efficient approximate coarse solvers and multilevel extensions would therefore be useful for large configurations. Another direction is to extend the method to nonlinear problems, including contact and friction problems in perforated domains. The duality framework in~\cite{JPX:2025} can guide the construction, while coarse corrections must accommodate the boundary constraints and changes in the active set.

\section*{Acknowledgments}
The author thanks Professor Martin J. Gander for introducing him to the method of reflections through his presentation at the 29th International Conference on Domain Decomposition Methods (DD29).

\bibliographystyle{siamplain}
\bibliography{refs_reflections}

\begin{thebibliography}{10}

\bibitem{CDGH:2026a}
{\sc S.~Chaillat, M.~Darbas, M.~J. Gander, and L.~Halpern}, {\em Accelerating
  the method of reflections with domain decomposition techniques for boundary
  integral equations in multiple scattering}, BIT, 66 (2026).
\newblock Art.~57.

\bibitem{CDGH:2026b}
{\sc S.~Chaillat, M.~Darbas, M.~J. Gander, and L.~Halpern}, {\em Domain
  decomposition for integral equations: {Schwarz}, methods of reflections and
  elasticity}, in Domain Decomposition Methods in Science and Engineering XXIX,
  Springer, 2026, pp.~257--264.

\bibitem{CGHS:2019}
{\sc G.~Ciaramella, M.~J. Gander, L.~Halpern, and J.~Salomon}, {\em Methods of
  reflections: relations with {Schwarz} methods and classical stationary
  iterations, scalability and preconditioning}, SMAI J. Comput. Math., 5
  (2019), pp.~161--193.

\bibitem{ET:1999}
{\sc I.~Ekeland and R.~Temam}, {\em Convex Analysis and Variational Problems},
  SIAM, Philadelphia, 1999.

\bibitem{EG:2021}
{\sc A.~Ern and J.-L. Guermond}, {\em Finite Elements {I}: Approximation and
  Interpolation}, Springer, Cham, 2021.

\bibitem{Evans:2010}
{\sc L.~C. Evans}, {\em Partial Differential Equations}, American Mathematical
  Society, Providence, RI, 2010.

\bibitem{GM:1989}
{\sc N.~Gaffke and R.~Mathar}, {\em A cyclic projection algorithm via duality},
  Metrika, 36 (1989), pp.~29--54.

\bibitem{GT:2001}
{\sc D.~Gilbarg and N.~S. Trudinger}, {\em Elliptic Partial Differential
  Equations of Second Order}, Springer, Berlin, 2001.

\bibitem{Golusin:1934}
{\sc G.~M. Golusin}, {\em {Aufl{\"o}sung einiger ebenen Grundaufaben der
  mathematischen Physik im Fall der {Laplaceschen} Gleichung und
  mehrfachzusammenh{\"a}ngender Gebiete, die durch Kreise begrenzt sind}}, Mat.
  Sb., 41 (1934), pp.~246--276.

\bibitem{GHS:2014}
{\sc A.~G{\"u}nnel, R.~Herzog, and E.~Sachs}, {\em A note on preconditioners
  and scalar products in {Krylov} subspace methods for self-adjoint problems in
  {Hilbert} space}, Electron. Trans. Numer. Anal., 41 (2014), pp.~13--20.

\bibitem{Han:1988}
{\sc S.-P. Han}, {\em A successive projection method}, Math. Program., 40
  (1988), pp.~1--14.

\bibitem{HS:2021}
{\sc M.~Hassan and B.~Stamm}, {\em An integral equation formulation of the
  {$N$}-body dielectric spheres problem. {Part I}: numerical analysis}, ESAIM
  Math. Model. Numer. Anal., 55 (2021), pp.~S65--S102.

\bibitem{HV:2018}
{\sc R.~M. H{\"o}fer and J.~J.~L. Vel{\'a}zquez}, {\em The method of
  reflections, homogenization and screening for {Poisson} and {Stokes}
  equations in perforated domains}, Arch. Ration. Mech. Anal., 227 (2018),
  pp.~1165--1221.

\bibitem{JPX:2025}
{\sc B.~Jiang, J.~Park, and J.~Xu}, {\em Subspace correction as a general
  framework for convex optimization algorithms}, arXiv preprint
  arXiv:2505.09765,  (2025).

\bibitem{Kress:2014}
{\sc R.~Kress}, {\em Linear Integral Equations}, Springer, New York, 2014.

\bibitem{LLS:2021}
{\sc P.~Laurent, G.~Legendre, and J.~Salomon}, {\em On the method of
  reflections}, Numer. Math., 148 (2021), pp.~449--493.

\bibitem{LW:2019}
{\sc C.-P. Lee and S.~J. Wright}, {\em Random permutations fix a worst case for
  cyclic coordinate descent}, IMA J. Numer. Anal., 39 (2019), pp.~1246--1275.

\bibitem{McLean:2000}
{\sc W.~McLean}, {\em Strongly Elliptic Systems and Boundary Integral
  Equations}, Cambridge University Press, Cambridge, 2000.

\bibitem{OZ:2017}
{\sc P.~Oswald and W.~Zhou}, {\em Random reordering in {SOR}-type methods},
  Numer. Math., 135 (2017), pp.~1207--1220.

\bibitem{OZ:2023}
{\sc P.~Oswald and W.~Zhou}, {\em Correction: Random reordering in {SOR}-type
  methods}, Numer. Math., 154 (2023), pp.~521--525.

\bibitem{PX:2025}
{\sc J.~Park and J.~Xu}, {\em Auxiliary space theory for the analysis of
  iterative methods for semidefinite linear systems}, arXiv preprint
  arXiv:2509.07179,  (2025).

\bibitem{RS:2021}
{\sc A.~Reusken and B.~Stamm}, {\em Analysis of the {Schwarz} domain
  decomposition method for the conductor-like screening continuum model}, SIAM
  J. Numer. Anal., 59 (2021), pp.~769--796.

\bibitem{Rockafellar:1967}
{\sc R.~T. Rockafellar}, {\em Duality and stability in extremum problems
  involving convex functions}, Pacific J. Math., 21 (1967), pp.~167--187.

\bibitem{SY:2021}
{\sc R.~Sun and Y.~Ye}, {\em Worst-case complexity of cyclic coordinate
  descent: {$O(n^2)$} gap with randomized version}, Math. Program., 185 (2021),
  pp.~487--520.

\bibitem{Tai:2003}
{\sc X.-C. Tai}, {\em Rate of convergence for some constraint decomposition
  methods for nonlinear variational inequalities}, Numer. Math., 93 (2003),
  pp.~755--786.

\bibitem{TW:2005}
{\sc A.~Toselli and O.~B. Widlund}, {\em Domain Decomposition
  Methods---Algorithms and Theory}, Springer, Berlin, 2005.

\bibitem{XXS:2021}
{\sc P.~Xiao, Z.~Xiao, and R.~Sun}, {\em Two symmetrized coordinate descent
  methods can be {$O(n^2)$} times slower than the randomized version}, SIAM J.
  Optim., 31 (2021), pp.~2726--2752.

\bibitem{Xu:1989}
{\sc J.~Xu}, {\em Theory of Multilevel Methods}, PhD thesis, Cornell
  University, Ithaca, NY, 1989.

\bibitem{Xu:1992}
{\sc J.~Xu}, {\em Iterative methods by space decomposition and subspace
  correction}, SIAM Rev., 34 (1992), pp.~581--613.

\bibitem{XZ:2002}
{\sc J.~Xu and L.~Zikatanov}, {\em The method of alternating projections and
  the method of subspace corrections in {Hilbert} space}, J. Amer. Math. Soc.,
  15 (2002), pp.~573--597.

\bibitem{XZ:2017}
{\sc J.~Xu and L.~Zikatanov}, {\em Algebraic multigrid methods}, Acta Numer.,
  26 (2017), pp.~591--721.

\end{thebibliography}
\end{document}